\title[Compression of Finite Groups Actions]{Compression of Finite Group Actions and  
Covariant Dimension, II}
\author{Hanspeter Kraft, Roland L\"otscher and Gerald W. Schwarz}
\address{Hanspeter Kraft, Roland L\"otscher\newline
\indent Mathematisches Institut der
Universit\"at Basel,\newline
    \indent Rheinsprung 21, CH-4051 Basel, Switzerland}
\email{Hanspeter.Kraft@unibas.ch, Roland.Loetscher@unibas.ch}
\address{Gerald W. Schwarz \newline
    \indent Department of Mathematics\newline
    \indent Brandeis University\newline
    \indent PO Box 549110\newline
    \indent Waltham, MA 02454-9110}
\email{schwarz@brandeis.edu}
\date{May 2008}
\thanks{The first two authors are partially supported by the Swiss National Science
    Foundation (Schweizerischer National\-fonds),
   and the third author by  NSA Grant H98230-06-1-0023}
 \newtheorem{thm}[subsection]{Theorem}
\newtheorem*{thm*}{Theorem}
\newtheorem{prop}[subsection]{Proposition}
\newtheorem{lem}[subsection]{Lemma}
\newtheorem{cor}[subsection]{Corollary}
\newtheorem*{cor*}{Corollary}
\newtheorem*{conj*}{Conjecture}
\theoremstyle{definition}
\newtheorem{exa}[subsection]{Example}
\theoremstyle{remark}
\newtheorem*{rem*}{Remark}
\newtheorem{rem}[subsection]{Remark}
\newtheorem{remarks}[subsection]{Remarks}
\newcommand{\op}{\operatorname}
\newcommand{\name}[1]{\textsc{#1\/}}
\newcommand{\NN}{{\mathbb N}} 
\newcommand{\QQ}{{\mathbb Q}}
\newcommand{\ZZ}{{\mathbb Z}}
\newcommand{\PP}{{\mathbb P}}
\newcommand{\CC}{{\mathbb C}}
\newcommand{\Cst}{{{\mathbb C}^*}}
\newcommand{\OOO}{\mathcal O}
\newcommand{\III}{\mathcal I}
\newcommand{\be}{\begin{enumerate}}
\newcommand{\ee}{\end{enumerate}}
\newcommand{\GL}{\op{GL}}
\newcommand{\PGL}{\op{PGL}}
\newcommand{\rk}{\op{rank}}
\newcommand{\Aut}{\op{Aut}}
\newcommand{\cdim}{\op{covdim}}
\newcommand{\edim}{\op{edim}}
\renewcommand{\Im}{\op{Im}}
\newcommand{\phimax}{{\phi_{\max}}}
\newcommand{\phit}{{{}^{t}\phi}}
\newcommand{\inv}{^{-1}}
\newcommand{\pr}{\op{pr}}
\newcommand{\id}{\op{Id}}
\renewcommand{\phi}{\varphi}
\newcommand{\up}[1]{^{(#1)}}
\newcommand{\pt}{\partial}
\subjclass[2000]{14L30, 14R20, 20C15, 20G20}
\begin{document}
\begin{abstract} 
Let $G$ be a finite group and $\phi\colon V\to W$ an equivariant morphism of finite dimensional $G$-modules. We say that $\phi$ is faithful if
$G$ acts faithfully on $\phi(V)$. The covariant dimension of $G$ is the minimum of the dimension of $\overline{\phi(V)}$ taken over all
faithful $\phi$.  In \cite{KS07} we investigated covariant dimension and were able to determine it in many cases. Our techniques largely depended upon finding homogeneous faithful covariants. After publication of \cite{KS07}, the junior author of this article pointed out several gaps in our proofs. Fortunately, this inspired us to find  better techniques, involving multihomogeneous covariants, which have enabled us to extend and complete the results, simplify the proofs and fill the gaps of \cite{KS07}.
 \end{abstract}

 \maketitle
 
 \section{Introduction}
 Our base field is the field $\CC$ of complex numbers.  Let $G$ be a finite group. All $G$-modules that we
consider will be finite dimensional.
A {\it covariant\/} of $G$ is an equivariant morphism $\phi\colon V \to W$ where $V$ and $W$ are
 $G$-modules. The {\it dimension\/} of $\phi$ is defined to be the dimension of the image of $\phi$:
$$
\dim \phi := \dim\overline{\phi(V)}.
$$
The covariant $\phi$ is   {\it faithful\/} if the group $G$ acts faithfully on the image $\phi(V)$. Equivalently, there is  a point $w\in\phi(V)$ with trivial isotropy group $G_w$.
The {\it covariant dimension\/} $\cdim G$ of $G$ is defined to be the minimum of $\dim \phi$ where $\phi\colon V \to W$ runs  over all faithful covariants of $G$. If $\dim\phi=\cdim G$ we say that $\phi$ is a  {\it minimal covariant\/}. In \cite[Proposition 2.1]{KS07} we show that there is a minimal covariant $\phi\colon V\to W$ if $V$ and $W$ are faithful. In particular, if $V$ is a faithful $G$-module, then there is a minimal faithful covariant $\phi\colon V\to V$.  

Suppose that $\phi\colon V\to W$ is a {\it rational map\/} which is $G$-equivariant. We call $\phi$ a {\it rational covariant}. Then one can define  the notion of $\phi$ being faithful and the dimension of 
$\phi$ as in the case of ordinary covariants. The {\it essential dimension $\edim G$\/} of $G$ is the minimum dimension of all its faithful rational covariants. It is easy to see that 
$$
\edim G\leq\cdim G\leq \edim G+1
$$ 
(see \cite[Proposition 2.2]{KS07} or the proof of Theorem \ref{multihomograt.thm} below).

First we show that there are always {\it multihomogeneous\/} minimal covariants and then obtain the exact relation between covariant and essential dimension (Theorem~\ref{covessdim.thm}). In certain cases we are able to describe the image of a covariant (Proposition~\ref{phimax.prop}) and deduce that for a {\it faithful\/} group $G$  (i.e., $G$ admits an irreducible faithful representation) we have
$\cdim (G\times\ZZ/p\ZZ)=\cdim G+1$ if and only if the prime $p$ divides the order $|Z(G)|$ of the center of $G$. This completes the analysis of \cite[\S 5--6]{KS07}. In the process we repair the proofs of Corollaries 6.1 and 6.2 of \cite{KS07}. They are supposed to be corollaries of Proposition 6.1, but the hypotheses of the proposition are not fulfilled. In  section \ref{examples.section} we give some examples of covariant dimensions of  groups,   in part generalizing \cite[Proposition 6.2]{KS07}. In sections \ref{gaschutz.section} and \ref{covdim2.section}  we repair two proofs, one concerning a characterization of faithful groups and their subgroups, and one about the classification of non-faithful groups of covariant dimension 2. In section \ref{errata.section} we list some minor errata from \cite{KS07}.

\section{Multihomogeneous Covariants}\label{multi.section}
 
Let $V=\oplus_{i=1}^n V_i$ and $W=\oplus_{j=1}^m W_j$ be direct sums of vector spaces and   let $\phi=(\phi_1,\dots,\phi_m)\colon V\to W$ be a morphism where none of the $\phi_j$ are zero. We say that $\phi$ is {\it multihomogeneous of degree $A=(\alpha_{ji}) \in M_{m\times n}(\ZZ)$\/} if, for an indeterminate $s$, we have
$$
\phi_j(v_1,\ldots,sv_i,\ldots,v_n) = s^{\alpha_{ji}}\phi_j(v_1,\ldots,v_n) \text{ for all }j=1,\ldots,m, i=1,\ldots,n.
$$
Whenever we consider the degree matrix $A$ of some $\phi$, we are always tacitly assuming that $\phi_j\neq 0$ for all $j$.

We now give a way to pass from a general $\phi$ to the multihomogeneous case.
In general, for indeterminates $s_1,\dots,s_n$, we have $\phi_j(s_1v_1,\dots,s_nv_n)=\sum_\alpha\phi_j\up\alpha s^\alpha$ for each $j$, where $\alpha=(\alpha_1,\dots,\alpha_n)\in\NN^n$ and  $s^\alpha=s_1^{\alpha_1}\dots s_n^{\alpha_n}$. If $\beta\in\NN^n$, let $\alpha\cdot\beta$ denote the usual inner product. Set $h_j=\max\{\alpha\cdot \beta\mid \phi_j\up\alpha\neq 0\}$, $j=1,\dots,m$. For $r\in\NN$ set $\phi_j\up r=\sum_{\alpha\cdot \beta=r} \phi_j\up\alpha$. Now we fix a $\beta$ such that, for each $r\in\NN$ and each $j$, the function $\phi_j\up r$ is zero or consists of one nonzero  
$\phi_j\up\alpha$. Then $\phi_{\max}:=(\phi_1\up {h_1},\dots,\phi_m\up{h_m})$ is multihomogeneous.
Note that the $h_j$ and so $\phi_{\max}$ depend upon our choice of $\beta$.

\begin{remarks}
\begin{enumerate}
\item 
If the $V_i$ and $W_j$ are $G$-modules and $\phi$ is equivariant, so are all the $\phi_j\up\alpha$ and $\phi_{\max}$. Note that no entry in $\phi_{\max}$ is zero since the same is true of $\phi$.
\item
If $\phi\colon V \to W$ is multihomogeneous of degree $A=(\alpha_{ji})$ and $\psi\colon W \to U = \bigoplus_{k=1}^\ell U_\ell$ is multihomogeneous of degree $B = (\beta_{kj})$ and all components of $\psi\circ\phi$ are non-zero, then the composition $\psi\circ\phi\colon V \to U$ is multihomogeneous of degree $B A$. 
\end{enumerate}
\end{remarks}

Concerning $\phi_{\max}$ there is the following main result.
 
\begin{lem}\label{phimax.lem}
Let $\phi\colon V\to W$ be a morphism as above. Then $\dim\phi_{\max}\leq\dim\phi$.
\end{lem}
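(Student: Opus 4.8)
The plan is to exhibit $\phi_{\max}$ as a one-parameter degeneration of $\phi$ and then to trap its image inside the special fibre of a family whose total space is irreducible of the expected dimension. First I would introduce two $\Cst$-actions: let $\lambda_V\colon\Cst\to\GL(V)$ act on the summand $V_i$ by the scalar $t^{\beta_i}$, where $\beta\in\NN^n$ is the vector fixed above, and let $\lambda_W\colon\Cst\to\GL(W)$ act on $W_j$ by $t^{h_j}$. Writing $\phi_j(\lambda_V(u)\inv v)=\sum_r\phi_j\up{r}(v)\,u^{-r}$ and multiplying the $j$-th block by $u^{h_j}$, one finds that
\[
\tilde\phi(u,v):=\lambda_W(u)\,\phi(\lambda_V(u)\inv v)\quad\text{has $j$-th block}\quad \sum_r\phi_j\up{r}(v)\,u^{h_j-r},
\]
a polynomial in $u$ since every exponent $h_j-r$ is $\ge 0$. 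Hence $\tilde\phi$ extends to a genuine morphism $\AA^1\times V\to W$ with $\tilde\phi(0,\,\cdot\,)=\phi_{\max}$, while for $u\ne0$ we have $\tilde\phi(u,\,\cdot\,)=\lambda_W(u)\circ\phi\circ\lambda_V(u)\inv$, which is $\phi$ composed with isomorphisms.

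Next I would set $X:=\overline{\phi(V)}$, so that $\dim X=\dim\phi$ and $X$ is irreducible because $V$ is. Consider the translated graph $\hat X:=\{(u,\lambda_W(u)x)\mid u\in\Cst,\ x\in X\}\subseteq\Cst\times W$; since $(u,x)\mapsto(u,\lambda_W(u)x)$ is an isomorphism onto $\hat X$, the set $\hat X\cong\Cst\times X$ is irreducible of dimension $1+\dim\phi$. Let $Y\subseteq\AA^1\times W$ be its closure. Then $Y$ is irreducible of dimension $1+\dim\phi$, and the projection $Y\to\AA^1$ is dominant since its image already contains $\Cst$; in particular $Y$ is not contained in the hyperplane $\{u=0\}$.

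Finally I would check that $\phi_{\max}(V)$ lands in the special fibre $Y_0:=Y\cap(\{0\}\times W)$. Fix $v\in V$ and look at the curve $u\mapsto(u,\tilde\phi(u,v))$. For $u\ne0$ its value is $(u,\lambda_W(u)\phi(\lambda_V(u)\inv v))$ with $\phi(\lambda_V(u)\inv v)\in\phi(V)\subseteq X$, so the punctured curve lies in $\hat X\subseteq Y$; as $Y$ is closed and the curve is a morphism on all of $\AA^1$, the value $(0,\phi_{\max}(v))$ at $u=0$ lies in $Y$ as well. Thus $\phi_{\max}(V)\subseteq Y_0$, and since $Y_0$ is a proper closed subset of the irreducible variety $Y$ we conclude $\dim\phi_{\max}=\dim\overline{\phi_{\max}(V)}\le\dim Y_0\le\dim Y-1=\dim\phi$.

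I expect the only delicate point to be the extension of $\tilde\phi$ across $u=0$: it relies precisely on the fact that after normalising the $j$-th block by $u^{h_j}$ every exponent $h_j-r$ is nonnegative and the constant term is exactly $(\phi_{\max})_j$, which is the content of the choice of $h_j$ and of the definition of $\phi_{\max}$. The remaining ingredients are formal — irreducibility of $Y$ descends from that of $V$, and the dimension estimate is just that a proper closed subvariety of an irreducible variety has strictly smaller dimension — so no appeal to flatness or to semicontinuity of fibre dimension is required; the whole force of the argument is that the degenerate image is caught inside a hyperplane section of an irreducible total space of dimension $1+\dim\phi$.
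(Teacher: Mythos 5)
Your proof is correct, and every step holds up: the exponents $h_j-r$ are nonnegative by the definition of $h_j$, so $\tilde\phi$ does extend across $u=0$ with $\tilde\phi(0,\cdot)=\phi_{\max}$; the set $\hat X$ is closed in $\Cst\times W$ (being the image of $\Cst\times X$ under an automorphism of $\Cst\times W$), hence irreducible of dimension $1+\dim\phi$; and trapping $(0,\phi_{\max}(v))$ in $Y$ by continuity of the curve $u\mapsto(u,\tilde\phi(u,v))$ is exactly right. However, this is not the route the paper's designated proof takes. There, the same family ${}^t\phi=\lambda(t)\circ\phi\circ\mu(t\inv)$ is introduced, but the conclusion comes from the Jacobian: for each $t\neq 0$ the morphism ${}^t\phi$ has image with closure $\lambda(t)X$, so the generic rank of its Jacobian is $\dim X$; since the minors are polynomial in $t$, the identical vanishing of all $(\dim X+1)\times(\dim X+1)$ minors on $\Cst\times V$ persists at $t=0$, whence $\dim\phi_{\max}\le\dim X$. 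That argument is shorter but leans on the characteristic-zero fact that the generic Jacobian rank computes the dimension of the image. Your degeneration argument avoids differentials altogether and needs only the elementary fact that a proper closed subset of an irreducible variety has strictly smaller dimension; it is in substance the alternative argument the paper itself records in Remark~\ref{phimax.rem}, where the total space $\bar Y=\overline{\Phi(\Cst\times V)}$ is formed and the special fibre $p^{-1}(0)$ is shown to contain $\{0\}\times\overline{\Im\phimax}$. That remark also extracts a bonus your write-up stops short of: when $\dim\phimax=\dim\phi$, the set $\{0\}\times\overline{\Im\phimax}$ is an irreducible \emph{component} of $p^{-1}(0)$, which is what the paper later uses (in the proof of Theorem~\ref{multihomograt.thm}) to transfer $\Cst$-stability of the image from $\phi$ to $\phimax$. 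So your approach is sound and, if anything, the more robust of the two; the paper keeps the Jacobian proof as the official one and your style of argument as the remark that carries the extra geometric information.
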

 
 \begin{proof}
 Let $X$ denote $\overline{\Im\phi}$. We have an action $\lambda$ of $\CC^*$ on $W$ where $\lambda(t)(w)=(t^{h_1}w_1,\dots,t^{h_m}w_m)$ for $w\in W$ and $t\in\CC^*$.  We also have an action $\mu$ of $\CC^*$ on $V$ by $\mu(t)(v_1,\dots,v_n)=(t^{\beta_1}v_1,\dots,t^{\beta_n}v_n)$ where $t\in\CC^*$ and $v\in V$. Let ${}^t\phi(v)$ denote $\lambda(t)(\phi(\mu(t\inv)(v)))$ for $t\in\CC^*$ and $v\in V$. Then ${}^t\phi(v)=\phi_{\max}(v)+t\psi(t,v)$ for some morphism $\psi\colon\CC\times V\to W$. Moreover, for $t\neq 0$, the closure of the image of ${}^t\phi$ is $\lambda(t)$ applied to $X$. Thus the maximal rank of the Jacobian matrix of ${}^t\phi\colon V\to W$ is $\dim X$ for all $t\in\CC^*$. It follows that the rank cannot go up when we specialize $t$ to be zero. Hence $\dim\phi_{\max}\leq\dim\phi$.
 \end{proof}
 
 \begin{rem}\label{phimax.rem} 
 With the notation of the proof above consider the  morphism
 $$
 \Phi\colon \CC\times V \to \CC\times W, \quad(t,v)\mapsto (t,{}^{t}\phi(v))
 $$
 where ${}^{0}\phi:=\phimax$, and let $Y$ denote $\Im\Phi$.
Clearly, we have  $Y\cap(\{t\}\times W) = \{t\}\times\Im\phit$ and so $Y\cap(\{0\}\times W) = \{0\}\times \Im\phimax$.
Denote by $\bar Y$ the closure of $Y$ in $\CC\times W$ and by $p\colon \bar Y \to \CC$ the morphism induced by the projection $\CC\times W \to \CC$. Then
 $$
 \bar Y \cap (\Cst\times W) = \bigcup_{t\neq 0} \{t\}\times \lambda(t)X
 $$
where $X = \overline{\Im\phi}$, because the right hand side is closed in $\Cst \times W$. As a consequence, we get 
$$
\bar Y = \overline{\Phi(\Cst\times W)},
$$
hence ${p}^{-1}(t) = \{t\}\times \overline{\Im\phit}$ for $t\neq 0$ and $p^{-1}(0) \supset \{0\}\times\overline{\Im\phimax}$.
Since $\bar Y$ is irreducible, this again proves Lemma~\ref{phimax.lem} and shows in addition that $\{0\}\times \overline{\Im\phimax}$ is an irreducible component of $p^{-1}(0)$ in case $\dim\phimax = \dim\phi$. This last remark has the following application which will be used later (see Theorem~\ref{multihomograt.thm}). {\it If\/ $\Im\phi$ is stable under scalar multiplication, then so is $\Im\phimax$, provided that $\dim\phimax = \dim\phi$.}
\newline
(In fact, if $\Im\phi$ is stable under scalar multiplication, then so is $\Im{}^{t}\phi$ for all $t\neq 0$ which implies that $\bar Y$ is stable under the $\Cst$-action $\lambda\cdot(t,w):=(t,\lambda w)$ on $\CC\times W$. It follows that $p^{-1}(0)$ is $\Cst$-stable, as well as all its irreducible components.)
\end{rem}
 
 \begin{thm} \label{multihomog.thm}
 Let $G$ be a finite group and let $V=\oplus_{i=1}^n V_i$ and $W=\oplus_{j=1}^m W_j$ be faithful representations where the $V_i$ and $W_j$ are irreducible submodules. Then there is a minimal regular multihomogeneous covariant $\phi\colon V \to W$   all of whose components   are nonzero.
\end{thm}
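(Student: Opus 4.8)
The plan is to start from an arbitrary minimal faithful covariant $\psi\colon V\to W$, which exists by \cite[Proposition 2.1]{KS07} since $V$ and $W$ are faithful, and to replace it by $\phimax$ for a suitable choice of $\beta$. By Lemma~\ref{phimax.lem} we have $\dim\phimax\le\dim\psi=\cdim G$, and by the remark recorded after the construction of $\phimax$ the map $\phimax$ is again equivariant and has all components nonzero as long as $\psi$ does. Thus the whole difficulty is concentrated in two points: arranging that the covariant we multihomogenize has all of its components nonzero, and arguing that $\phimax$ is still \emph{faithful}, so that $\dim\phimax\ge\cdim G$ forces equality.

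The key observation, which I would isolate first, is a faithfulness criterion that makes ``all components nonzero'' exactly the right hypothesis: if $W=\bigoplus_j W_j$ is faithful with the $W_j$ irreducible and $\phi\colon V\to W$ is any covariant all of whose components $\phi_j$ are nonzero, then $\phi$ is automatically faithful. To see this, let $N\trianglelefteq G$ be the kernel of the $G$-action on $Z:=\overline{\Im\phi}$; then $Z\subseteq W^N=\bigoplus_j W_j^N$. If $N\neq\{1\}$, then by faithfulness of $W$ some $W_{j_0}$ carries a nontrivial $N$-action, and since $N$ is normal and $W_{j_0}$ is irreducible, $W_{j_0}^N$ is a proper $G$-submodule, hence $0$. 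But then the $j_0$-component of every point of $Z$ vanishes, i.e. $\phi_{j_0}=0$, contradicting our hypothesis. This criterion removes faithfulness from the list of worries: it suffices to produce a \emph{multihomogeneous} covariant $V\to W$ of dimension $\cdim G$ with all components nonzero.

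The remaining work is to turn $\psi$ into a covariant of the \emph{same} (minimal) dimension having all components nonzero. Let $S=\{j:\psi_j\neq0\}$ and let $\bar\psi\colon V\to W_S:=\bigoplus_{j\in S}W_j$ be its nonzero part; since $\psi$ is faithful and its zero components contribute nothing, $G$ already acts faithfully on $X_S:=\overline{\Im\bar\psi}$, so $W_S$ is faithful. Because $G$ acts faithfully on the irreducible affine variety $X_S$, every irreducible $G$-module occurs in $\CC[X_S]$: the extension $\CC(X_S)/\CC(X_S)^G$ is Galois with group $G$, so $\CC(X_S)$, and after clearing denominators $\CC[X_S]$, contains every irreducible. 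Hence for each $j\notin S$ there is a covariant $d_j\colon W_S\to W_j$ not vanishing on $X_S$. Forming the graph-type map $\sigma\colon W_S\to W$, $\sigma(w)=\bigl(w,(d_j(w))_{j\notin S}\bigr)$, which is a closed immersion, I set $\psi':=\sigma\circ\bar\psi$. Then $\overline{\Im\psi'}=\sigma(X_S)\cong X_S$ has dimension $\cdim G$, and every component of $\psi'$ is nonzero: the components with $j\in S$ equal $\psi_j$, and those with $j\notin S$ equal $d_j\circ\bar\psi\neq0$.

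Finally I apply the $\phimax$-construction to $\psi'$, choosing $\beta$ as in the definition so that $\phimax$ is multihomogeneous, equivariant, and with all components nonzero (inherited from $\psi'$). Lemma~\ref{phimax.lem} gives $\dim\phimax\le\dim\psi'=\cdim G$, while the faithfulness criterion above shows $\phimax$ is faithful, whence $\dim\phimax\ge\cdim G$; thus $\phimax$ is the desired minimal multihomogeneous covariant. I expect the main obstacle to be the faithfulness step: it is \emph{not} preserved automatically under the degeneration defining $\phimax$, since a whole component of a fixed locus could fall into the special fibre $p^{-1}(0)$ of Remark~\ref{phimax.rem}. The point of the argument is precisely that the nonzero-components condition, which \emph{is} preserved, already forces faithfulness by the normal-subgroup argument, thereby sidestepping any direct limit analysis of the family $\bar Y$.
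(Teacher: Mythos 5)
Your proof is correct, and its outer skeleton coincides with the paper's: pass to a minimal covariant all of whose components are nonzero, multihomogenize it via $\phimax$, and recover faithfulness of $\phimax$ from the fact that none of its components vanish. The differences lie in how you obtain the two ingredients. For faithfulness, the paper simply cites \cite[Lemma 4.1]{KS07}, which is precisely your normal-subgroup criterion; your reproof of it is correct (note that the kernel of the $G$-action on $\Im\phi$ equals the kernel of the action on its closure, so working with $Z=\overline{\Im\phi}$ is harmless). For the nonzero-components step the routes genuinely diverge: the paper invokes the sharper form of \cite[Proposition 2.1]{KS07}, namely that a minimal covariant can be arranged to send a prescribed point $v$ with trivial stabilizer to a prescribed point $w$ with trivial stabilizer, and choosing $w$ with all components nonzero settles the matter in one line. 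Not having this available, you repair an arbitrary minimal covariant $\psi$ by hand: discard its zero components, observe that $G$ still acts faithfully on $X_S=\overline{\Im\bar\psi}$, use the fact that every irreducible $G$-module occurs in $\CC[X_S]$ (Artin's lemma plus the normal basis theorem, then clearing denominators by an invariant) to produce nonzero covariants $d_j$ into the missing summands, and append them via the graph map $\sigma$. This works; the one detail worth spelling out is that a nonzero covariant on $X_S$, i.e.\ a nonzero element of $\Hom_G(W_j^*,\CC[X_S])$, extends to one on $W_S$ because the restriction $\CC[W_S]\to\CC[X_S]$ is a surjection of $G$-modules and finite groups are linearly reductive. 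In exchange for being longer, your argument is self-contained: it needs only the bare existence of a minimal covariant, and it isolates the useful principle that for a target which is a sum of irreducibles of a faithful representation, faithfulness of a covariant is equivalent to the purely formal condition that no component vanishes, so that this condition (which survives the degeneration to $\phimax$) does all the work.
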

\begin{proof} Let $\phi\colon V \to W$ be a minimal covariant. We can always arrange that for given $v\in V$ and $w \in W$, both with trivial stabilizer in $G$, we have $\phi(v) = w$ \cite[Proposition 2.1]{KS07}. In particular, we can assume that all components of $\phi$ are nonzero. Then $\phi_{\max}\colon V \to W$ is a multihomogeneous covariant, $\dim\phi_{\max} \leq \dim \phi$ and $\phi_{\max}$ is faithful since all its components are non-zero \cite[Lemma 4.1]{KS07}.
\end{proof}
 
 \begin{cor} \label{multihomog.cor}
 Let $V_i$ be a faithful irreducible representation of the group $G_i$, $i=1,\dots,n$. Then $V=\oplus_{i=1}^n V_i$ is a faithful representation of $G:=G_1\times\dots \times G_n$, and there is a minimal multihomogeneous covariant $\phi\colon V\to V$.
 \end{cor}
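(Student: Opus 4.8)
The plan is to reduce the statement directly to Theorem~\ref{multihomog.thm} applied with $W = V$. The only real work is to equip $V$ with a suitable $G$-module structure and to check that the hypotheses of that theorem are met. First I would make the $G$-action explicit: a priori each $V_i$ is only a $G_i$-module, so I turn it into a $G$-module by letting $G$ act through the projection $\pi_i\colon G \to G_i$, i.e.\ $(g_1,\dots,g_n)$ acts on $V_i$ as $g_i$ does. With this convention $V=\oplus_{i=1}^n V_i$ becomes a $G$-module.

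Next I would verify the two properties required by Theorem~\ref{multihomog.thm}. Since the $G$-action on $V_i$ factors through $\pi_i$, the $G$-submodules of $V_i$ coincide with its $G_i$-submodules; hence $V_i$ is irreducible as a $G$-module because it is irreducible as a $G_i$-module, and $V=\oplus_i V_i$ is a direct sum of irreducible $G$-submodules. For faithfulness, an element $(g_1,\dots,g_n)$ lies in the kernel of $G\to\GL(V)$ precisely when each $g_i$ acts trivially on $V_i$; as each $V_i$ is a faithful $G_i$-module, this forces $g_i=e$ for all $i$, so $G$ acts faithfully on $V$.

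With these verifications in hand I would invoke Theorem~\ref{multihomog.thm} for the faithful module $V=\oplus_i V_i$, taking $W=V$ with the same decomposition into the irreducible submodules $V_i$. The theorem then yields a minimal regular multihomogeneous covariant $\phi\colon V\to V$, which is exactly the assertion. I expect no genuine obstacle here: the substance lies entirely in Theorem~\ref{multihomog.thm}, and the corollary amounts to recording the $G$-module structure on $V$ together with the two routine checks of irreducibility and faithfulness, both of which are immediate from the fact that the $G$-action on each summand factors through the corresponding projection.
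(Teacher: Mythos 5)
Your proposal is correct and is exactly the route the paper intends: the corollary is stated without separate proof precisely because it follows from Theorem~\ref{multihomog.thm} with $W=V$, once one notes (as you do) that each $V_i$, viewed as a $G$-module via the projection $G\to G_i$, remains irreducible, and that $V=\oplus_i V_i$ is then a faithful $G$-module. The two verifications you spell out are the routine checks the paper leaves implicit, so there is nothing to add.
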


We want to prove similar results for a {\it rational\/} covariant $\psi\colon V \to W$. It is obvious how to extend the definitions of   {\it minimal\/} and {\it multihomogeneous of degree $A$\/} to rational covariants where in this case the matrix $A$ might contain negative entries. 

\begin{thm} \label{multihomograt.thm}
Let $G$ be a finite group and let $V=\oplus_{i=1}^n V_i$ and $W=\oplus_{j=1}^m W_j$ be faithful representations where the $V_i$ and $W_j$ are irreducible submodules. Then there is a minimal rational multihomogeneous covariant $\psi\colon V \to W$   all of whose components   are non-zero and which is of the form $\psi = h^{-1}\phi$ where $h$ is a multihomogeneous invariant and $\phi\colon V \to W$ a multihomogeneous minimal regular covariant.
\end{thm}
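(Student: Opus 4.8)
The plan is to produce $\psi$ by degenerating a minimal \emph{rational} covariant to a multihomogeneous one, in exact analogy with the passage from $\phi$ to $\phimax$ in Lemma~\ref{phimax.lem}, and then to read off $h$ and $\phi$ as the denominator and numerator of the limit. First I would fix a minimal faithful rational covariant $\psi_0\colon V\to W$, so that $\dim\psi_0=\edim G$; its existence for the given faithful $V$ and $W$, and the normalization that every component $\psi_{0,j}$ is nonzero, follow as in \cite[Propositions 2.1 and 2.2]{KS07}. Clearing denominators by a $G$-invariant polynomial (replace any denominator by the product of its $G$-translates) I would write $\psi_0=h_0^{-1}\phi_0$ with $h_0$ a nonzero invariant and $\phi_0\colon V\to W$ a regular covariant whose components $\phi_{0,j}=h_0\psi_{0,j}$ are again all nonzero.

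Next I would run the $\Cst$-degeneration from the proof of Lemma~\ref{phimax.lem} on $(\phi_0,h_0)$. For a generic weight $\beta\in\NN^n$ with the associated actions $\mu$ on $V$ and $\lambda$ on $W$, I would set ${}^t\psi_0(v)=\lambda(t)\,\psi_0(\mu(t^{-1})v)$, taking the $j$-th exponent of $\lambda$ to be the top $\beta$-weight of $\phi_{0,j}$ minus that of $h_0$. A direct computation gives $\lim_{t\to0}{}^t\psi_0=h^{-1}\phi=:\psi$, where $h$ is the top $\beta$-weight part of $h_0$, a multihomogeneous invariant, and $\phi=(\phi_1,\dots,\phi_m)$ collects the top $\beta$-weight parts of the $\phi_{0,j}$, a multihomogeneous covariant (for generic $\beta$ each part is a single term). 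The Jacobian-rank argument of Lemma~\ref{phimax.lem}, applied to the family ${}^t\psi_0$ through generic ranks, shows $\dim\psi\le\dim\psi_0=\edim G$, so $\psi$ is a minimal rational covariant. Since $\phi$ is multihomogeneous with all components nonzero it is faithful by \cite[Lemma 4.1]{KS07}, and as $h$ is invariant any $g$ fixing $\Im\psi$ pointwise fixes $\Im\phi$ pointwise; hence $\psi$ too is faithful with all components nonzero.

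It remains to make the numerator \emph{minimal regular}. I would first note $\dim\phi\le\dim\psi+1$: the map $\phi=h\cdot\psi$ factors rationally as $v\mapsto(\psi(v),h(v))\in W\oplus\CC$ followed by $(w,t)\mapsto tw$, and adjoining the single coordinate $h$ raises the image dimension by at most one. As $\phi$ is a faithful regular covariant, $\dim\phi\ge\cdim G$, so $\cdim G\le\dim\psi+1=\edim G+1$, which incidentally reproves the upper bound of the introduction. Now split cases. If $\cdim G=\edim G+1$, then $\cdim G\le\dim\phi\le\edim G+1=\cdim G$ forces $\dim\phi=\cdim G$, so $\phi$ is minimal regular and $\psi=h^{-1}\phi$ is as required. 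If instead $\cdim G=\edim G$, I discard the above and take a minimal regular multihomogeneous covariant $\phi\colon V\to W$ with all components nonzero from Theorem~\ref{multihomog.thm} together with $h=1$; then $\psi:=\phi=h^{-1}\phi$ is rational, multihomogeneous, faithful, of dimension $\cdim G=\edim G$, hence minimal.

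I expect the main obstacle to be the rational analogue of Lemma~\ref{phimax.lem}: justifying that $\lim_{t\to0}{}^t\psi_0$ exists as the rational covariant $h^{-1}\phi$ and that the generic Jacobian rank is lower semicontinuous across $t=0$ in spite of the poles of the family. The clean route is to imitate Remark~\ref{phimax.rem}, working inside the closure of the graph $\{(t,{}^t\psi_0(v))\}\subset\CC\times W$ and examining its fiber over $t=0$, where $\{0\}\times\overline{\Im\psi}$ appears as a component of the right dimension.
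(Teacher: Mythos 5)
Your argument is correct, and it arrives at the same decomposition $\psi=h^{-1}\phi$ as the paper, but the engine driving the key step is genuinely different. The paper never degenerates a rational map: it lifts $\psi$ to the \emph{regular} covariant $\phi=(f\psi,f)\colon V\to W\oplus\CC$ by appending the clearing invariant as an extra coordinate, applies the regular degeneration $\phi\mapsto\phimax=(\phi_1,\dots,\phi_m,h)$ of Lemma~\ref{phimax.lem} to that lift, and only then divides by the last coordinate to form $\psi_{\max}=h^{-1}(\phi_1,\dots,\phi_m)$. The delicate point there --- why dividing by $h$ does not cost a dimension --- is settled by a case analysis on $\dim\phi$: either $\dim\phi=\dim\psi$ and the bound is immediate, or $\overline{\phi(V)}$ is a $\Cst$-cone, in which case Remark~\ref{phimax.rem} shows that $\overline{\phimax(V)}$ is again $\Cst$-stable, so projectivizing drops the dimension back to $\dim\psi$ (with a fallback to a minimal regular covariant when $\phi$ is not minimal). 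You instead degenerate the rational covariant directly, shifting the $\lambda$-weights by the top $\beta$-weight of the denominator $h_0$ so that the division is built into the limit; this bypasses Remark~\ref{phimax.rem} entirely, but requires a rational-map analogue of Lemma~\ref{phimax.lem}, which you rightly flag as the main point to check --- and which does hold, since the family $(t,v)\mapsto{}^t\psi_0(v)$ is regular on a dense open subset of $\CC\times V$ meeting $\{0\}\times V$, where lower semicontinuity of Jacobian rank applies (or one argues on the graph closure, as you suggest). Your concluding case split on $\cdim G=\edim G$ versus $\cdim G=\edim G+1$, with the fallback $\psi=\phi$, $h=1$ from Theorem~\ref{multihomog.thm} in the first case, plays the role of the paper's sub-cases and is non-circular, since you rederive $\cdim G\le\edim G+1$ along the way. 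In short, the paper's route stays within regular covariants so that its earlier lemmas apply verbatim, while yours handles rational maps head-on at the price of extending the semicontinuity lemma to that setting, and in exchange makes transparent exactly where the $\cdim$/$\edim$ dichotomy enters.
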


\begin{proof} 
Let $\psi\colon V \to W$ be a minimal rational covariant. We can assume that all components of $\psi$ are nonzero. 
There is a nonzero invariant $f\in\OOO(V)^G$ such that $f\psi$ is regular. Define the regular covariant
$$
\phi:=(f\psi,f)\colon V \to W\oplus \CC, \quad v\mapsto (f\psi(v),f(v))
$$
which is faithful since $\psi$ is. Moreover, either $\dim\phi=\dim\psi$ or $\dim\phi = \dim\psi+1$, where the second case takes place if and only if 
$\overline{\phi(V)}$ is stable under scalar multiplication with $\Cst$. This follows from the fact  that the composition of rational maps  $V \to W\oplus\CC \to \PP(W\oplus\CC) \to W$ is   $\psi$. 

As above we obtain a multihomogeneous covariant $\phi_{\max}\colon V \to W\oplus\CC$ which has the form $\phi_{\max} = (\phi_1,\ldots,\phi_m,h)$. Now define the multihomogeneous rational covariant
$$
\psi_{\max} := (\frac{\phi_1}{h},\ldots, \frac{\phi_m}{h}) \colon V \to W
$$
which is again faithful. Moreover, $\dim\psi_{\max} \leq \dim\phi_{\max} \leq \dim \phi$. So if  $\dim\phi = \dim\psi$ then $\psi_{\max}$ is a minimal multihomogeneous rational covariant and we are done.

Now assume that  $\dim\phi = \dim\psi + 1$   so that $\overline{\phi(V)}$ is $\Cst$-stable. If $\phi$ is not minimal then there is a minimal homogeneous regular covariant $\tilde\phi$ of dimension $\leq\dim\psi$ and we are again done. 
Therefore we can assume that $\phi$ is minimal, hence $\dim\phimax = \dim \phi$. Since  $\overline{{}^t\phi(V)}$ is $\Cst$-stable for all $t\neq 0$ it follows from Remark~\ref{phimax.rem}
that $\overline{\phi_{\max}(V)}$ is $\Cst$-stable, too, and so
$$
\dim\psi_{\max} \leq \dim \phi_{\max}-1\leq \dim \phi -1 =  \dim\psi.
$$ 
Hence, $\psi_{\text{max}}$ is a minimal multihomogeneous rational covariant.
\end{proof}

\section{Covariant dimension and essential dimension}\label{cdim-edim.section}
In this section we extend \cite[Corollary 4.2]{KS07} to arbitrary groups and give the exact relation between covariant and essential dimension of finite groups.
\begin{thm}\label{covessdim.thm}
Let $G$ be a non-trivial finite group. Then $\cdim G = \edim G$ if and only if $G$ has a non-trivial center.
\end{thm}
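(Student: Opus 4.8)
The plan is to use $\edim G\le\cdim G\le\edim G+1$ and reduce the theorem to a single existence statement: $\cdim G=\edim G$ holds if and only if $G$ admits a regular faithful covariant of dimension exactly $\edim G$, and I will show such a covariant exists if and only if $Z(G)\neq\{1\}$. Throughout I reduce to multihomogeneous covariants via Theorems~\ref{multihomog.thm} and~\ref{multihomograt.thm}, and I repeatedly use the elementary fact that an element of $G$ acting by a scalar on every irreducible constituent $W_j$ of a faithful module $W$ must be central.

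For the implication $\cdim G=\edim G\Rightarrow Z(G)\neq\{1\}$, take a minimal regular multihomogeneous covariant $\phi=(\phi_1,\dots,\phi_m)\colon V\to W$ (Theorem~\ref{multihomog.thm}), of dimension $\edim G$, and let $X=\overline{\Im\phi}$. Writing $d_j\ge 1$ for the total degree of $\phi_j$, the one-parameter group $\lambda(t)\colon w_j\mapsto t^{d_j}w_j$ satisfies $\phi(tv)=\lambda(t)\phi(v)$, preserves $X$, commutes with the $G$-action, and acts non-trivially, so its generic orbit on $X$ is one-dimensional. The rational quotient $Y:=X/\lambda(\Cst)$ then has dimension $\edim G-1$, carries a $G$-action making the dominant composite $V\dashrightarrow X\dashrightarrow Y$ equivariant, and embeds $G$-equivariantly into a $G$-module; this exhibits a rational covariant of dimension $\edim G-1$. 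By minimality of $\edim G$ it cannot be faithful, so the kernel $N$ of the $G$-action on $Y$ is non-trivial. Finally, an element trivial on $Y$ acts on $X$ along the $\lambda(\Cst)$-orbits, i.e.\ as some $\lambda(t)$, hence by a scalar on each $W_j$, and is therefore central; thus $\{1\}\neq N\subseteq Z(G)$.

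For the converse $Z(G)\neq\{1\}\Rightarrow\cdim G=\edim G$, fix a central element $z$ of prime order $p$ and a primitive $p$-th root of unity $\zeta$, and let $V$ be the sum of all irreducible representations on which $z$ acts by $\zeta$. Using that every character of $Z(G)$ arises as a central character and that $A\otimes B=\Id$ forces $A$ and $B$ to be scalars, one checks that $V$ is faithful and that $z$ acts on $V$ as the scalar $\zeta$. Since essential dimension may be computed on any faithful representation (no-name lemma), Theorem~\ref{multihomograt.thm} provides a minimal multihomogeneous rational covariant $\psi\colon V\to W$ of dimension $\edim G$ with all components non-zero; clearing denominators gives a regular faithful covariant $(f\psi,f)$ of dimension $\edim G$ or $\edim G+1$, the latter exactly when its image is a cone, i.e.\ when every $\psi_j$ has total degree $0$. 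But total degree $0$ means $\psi_j(\zeta v)=\psi_j(v)$, while equivariance and $z=\zeta\cdot\Id$ give $\psi_j(\zeta v)=\eta_j\psi_j(v)$ with $z$ acting on the irreducible $W_j$ by the scalar $\eta_j$; this forces every $\eta_j=1$, so $z$ acts trivially on $\overline{\Im\psi}$, contradicting faithfulness. Hence the image is not a cone, the regular covariant has dimension $\edim G$, and $\cdim G\le\edim G$.

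The step I expect to be the main obstacle is the kernel analysis in the first implication: making the rational quotient $Y=X/\lambda(\Cst)$ and its equivariant embedding into a $G$-module precise, and above all proving that every element of $G$ acting trivially on $Y$ is genuinely realized inside $\lambda(\Cst)$ on $X$ (so that it is scalar on each $W_j$ and hence central). The dimension count and the character-theoretic construction of $V$ in the converse are comparatively routine once Theorems~\ref{multihomog.thm} and~\ref{multihomograt.thm} are in hand.
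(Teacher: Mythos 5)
Your first implication (that $\cdim G=\edim G$ forces $Z(G)\neq\{e\}$, equivalently that a trivial center forces $\cdim G=\edim G+1$) is essentially the paper's own argument from Lemma~\ref{centerG.lem}, Proposition~\ref{rankA.prop} and Corollary~\ref{edimcdim.cor}: you quotient the image $X$ by the one-parameter group $\lambda(\Cst)$ coming from the total degrees, where the paper projects to $\prod_j\PP(W_j)$ and quotients by the full torus. The kernel analysis you flag as the main obstacle is exactly the paper's proof of Lemma~\ref{centerG.lem}(2): if $g$ preserves the generic $\lambda(\Cst)$-orbits, then every point of the irreducible, $G$-stable set $X_j=\pr_{W_j}(X)$ is an eigenvector of $g|_{W_j}$, so $X_j$ lies in a single eigenspace, which spans the simple module $W_j$; hence $g$ is scalar on each $W_j$ and therefore central. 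Modulo the standard fact that a faithful $G$-variety dominated by $V$ yields a faithful linear rational covariant of the same dimension (a fact the paper also uses implicitly when its target is $\prod_j\PP(W_j)$), this half is correct.

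The converse, however, contains a genuine gap. You claim that the image of $(f\psi,f)$ is a cone \emph{if and only if} every $\psi_j$ has total degree $0$. Only the direction you do not need is true. Cone-ness of $\overline{\Im(f\psi,f)}$ means precisely that $\dim(f\psi,f)=\dim\psi+1$, i.e.\ that $f$ is transcendental over the subfield $\CC(\psi_1,\ldots,\psi_m)\subset\CC(V)$; this is a transcendence condition, not a degree condition, and it can hold with nonzero total degrees. For instance, for the multihomogeneous map $\psi=v_1^2/v_2$ (total degree $1$) and the multihomogeneous $f=v_2^2$, the cleared map $(f\psi,f)=(v_1^2v_2,\,v_2^2)$ is dominant onto $\CC^2$, whose closure is a cone. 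Multihomogeneity only gives stability of the image under the torus $T(A)(\Cst^n)$, and the diagonal $\Cst$ may stabilize the image for reasons having nothing to do with that torus; note that the proof of Theorem~\ref{multihomograt.thm} explicitly allows the cone case to occur for the cleared covariant, so your setup does not obviously exclude it. What your central element $z=\zeta\cdot\Id_V$ genuinely gives (via $\eta_j=\zeta^{d_j}$) is that some total degree satisfies $d_{j_0}\not\equiv 0 \bmod p$; this rules out ``all total degrees zero'' but not cone-ness, so the argument stops exactly where the real difficulty begins. The paper's Proposition~\ref{edimcdim.prop} closes this hole by a different device: instead of appending an invariant as an extra coordinate (which cannot control the dimension), it multiplies the components of $\psi$ by powers $h^{\gamma_j}$ with $\gamma=A\beta$ all positive (arranged in its step (b) using $A\neq 0$, which is where the nontrivial center enters), so that the resulting \emph{regular} faithful covariant has image inside the $T(A)(\Cst^n)$-stable set $\overline{\psi(V)}$ and hence dimension at most $\edim G$. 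Your construction of the faithful module $V$ on which $z$ acts as the scalar $\zeta$ is correct and could be a useful normalization, but it cannot substitute for this step.
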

The proof is given in Corollary~\ref{edimcdim.cor} and Proposition~\ref{edimcdim.prop} below. We need  some preparation. In this section we have faithful representations $V=\bigoplus_{i=1}^{n}V_{i}$ and $W=\bigoplus_{j=1}^{m}W_{j}$ where the $V_i$ and $W_j$ are irreducible submodules.  We have a natural action of the tori $\Cst^n$ on $V$ and $\Cst^m$ on $W$. These actions are free on the open sets $V':=\{v=(v_{1},\ldots,v_{n})\mid v_{i}\neq 0\text{ for all }i\} \subset V$ and 
$W'\subset W$ defined similarly. If $\phi\colon V \to W$ is multihomogeneous of degree $A =(\alpha_{ji})$ then $\phi$ is equivariant with respect to the homomorphism 
$$
T(A)\colon \Cst^n \to \Cst^m, \qquad s=(s_1,\ldots,s_n)\mapsto (s^{\alpha_1},s^{\alpha_2},\ldots,s^{\alpha_m})
$$
where $\alpha_j := (\alpha_{j1},\alpha_{j2},\ldots,\alpha_{jn})$ and $s^{\alpha_j} =  s_1^{\alpha_{j1}}s_2^{\alpha_{j2}}\cdots s_n^{\alpha_{jn}}$, as before. This implies that the (closure of the) image of $\phi$ is stable under the subtorus $\Im T(A) \subset \Cst^m$. 
The actions of $G$ and $\Cst^n$ commute and so, considered as subgroups of $\GL(V)$,  we have $\Cst^n \cap G = Z(G)$.
\begin{rem}\label{invarcovar.rem}
Let $\phi\colon V \to W$ be a multihomogeneous covariant of degree $A$. If $\mu \in A\QQ^{n}\cap\ZZ^{m}$, then $\phi(V)\subset W$ is stable under the $\Cst$-action $\rho(t)(w_{1},\ldots,w_{m}) := (t^{\mu_{1}}w_{1},\ldots,t^{\mu_{m}}w_{m})$. It follows that for any invariant $f\in\OOO(V)^{G}$ the morphism
\begin{equation}\label{eqn1}
\tilde\phi\colon v=(v_{1},\ldots,v_{n}) \mapsto (f(v)^{\mu_{1}}\phi_{1}(v),\ldots, f(v)^{\mu_{m}}\phi_{m}(v))
\end{equation}
is a covariant with $\tilde\phi(V)\subset\phi(V)$,  hence $\dim\tilde\phi\leq\dim\phi$. Moreover,  if $\phi$ is faithful and $f$ multihomogeneous, then $\tilde\phi$ is faithful and multihomogeneous of degree
$\tilde A :=  \mu   \deg f + A$, i.e., $\tilde \alpha_{ji} = \mu_{j}\deg_{V_{i}}f + \alpha_{ji}$. 

This has the following application which will be used later in the proof of Corollary~\ref{timesZp.cor}: {\it Let $p$ be a prime which does not divide the order of the center of $G$. Then there is a minimal multihomogeneous covariant $\phi\colon V \to V$ of degree $A\not\equiv 0 \mod p$.} 
\newline
(Start with a minimal multihomogeneous covariant $\phi\colon V \to V$ of degree $A$ and assume that $A \equiv 0 \mod p$. We can choose a $\mu\in A\QQ^{n}\cap\ZZ^{m}$ such that $\mu_{j_{0}}\not\equiv 0 \mod p$ for at least one $j_{0}$. Moreover, there is a multihomogeneous invariant $f$ of total degree $\not\equiv 0 \mod p$ (see \cite[Lemma~4.3]{KS07}). But then $\mu  \deg f \not\equiv 0 \mod p$, and so the covariant $\tilde\phi$ given in (\ref{eqn1}) is minimal and has degree $\mu  \deg f + A \not \equiv 0 \mod p$.)
\end{rem}

For the next results we need some preparation. Let $\phi\colon V \to W$ be a multihomogeneous faithful covariant of degree $A = (\alpha_{ji})$ where all components $\phi_{j}$ are non-zero. Define $W':=\{(w_{1},\ldots,w_{m})\in W\mid
w_{i}\neq 0 \text{ for all }i\}=\prod_{j=1}^{m}(W_{j}\setminus\{0\})$. The group $\Cst^{m}$ acts freely on $W'$ and $W' \to \prod_{j=1}^{m}\PP(W_{j})$ is the geometric quotient. Let $X:=\overline{\phi(V)}$ and $\PP(X) \subset \prod_{j=1}^{m}\PP(W_{j})$ the image of $X$, and set $X' := X \cap W'$. Finally, denote by $S\subset \Cst^{m}$ the image of the homomorphism $T(A)\colon \Cst^{n}\to \Cst^{m}$. Then we have the following.
\begin{lem}\label{centerG.lem} 
\be
\item $\dim \PP(X) \leq \dim X -\dim S \leq \dim X -\rk Z(G)$.
\item The kernel of the action of $G$ on $\PP(X)$ is equal to $Z(G)$.
\ee
\end{lem}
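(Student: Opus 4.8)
The plan is to read off both statements from the restriction to $X$ of the geometric quotient $W' \to \prod_{j=1}^{m}\PP(W_{j})$. Since all $\phi_{j}$ are non-zero and $X=\overline{\phi(V)}$ is irreducible (being the closure of the image of the irreducible variety $V$), the set $X'=X\cap W'$ is open and dense in $X$, so $\dim X'=\dim X$, and we get a surjective map $q\colon X'\to\PP(X)$. Because $X$ is stable under $S=\Im T(A)\subset\Cst^{m}$ and $\Cst^{m}$ acts freely on $W'$, each fibre of $q$ contains an $S$-orbit $Sx$, which has dimension $\dim S$. By the theorem on fibre dimensions this gives $\dim\PP(X)\le\dim X'-\dim S=\dim X-\dim S$, the first inequality of (1).

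For the second inequality of (1) I would produce a copy of $Z(G)$ inside the torus $S$. Writing $z\in Z(G)\subset\Cst^{n}$ as the tuple $(\zeta_{1},\dots,\zeta_{n})$ of scalars by which it acts on the $V_{i}$, equivariance of $\phi$ together with multihomogeneity forces the scalar by which $z$ acts on $W_{j}$ to be $\zeta^{\alpha_{j}}$; that is, $T(A)$ restricted to $Z(G)$ is exactly the scalar action of $Z(G)$ on $W$, which is injective because $W$ is faithful. Hence $S\supset T(A)(Z(G))\cong Z(G)$. Since $S$ is a subtorus of $\Cst^{m}$ and a torus of dimension $d$ contains no finite abelian subgroup of rank larger than $d$ (its $\ell$-torsion is $(\ZZ/\ell)^{d}$ for every prime $\ell$), we conclude $\rk Z(G)\le\dim S$.

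For (2), the inclusion $Z(G)\subset\Ker$ is immediate: every $z\in Z(G)$ acts on the irreducible module $W_{j}$ by a scalar (Schur), hence trivially on $\prod_{j}\PP(W_{j})$ and so on $\PP(X)$. For the converse, suppose $g\in G$ fixes $\PP(X)$ pointwise. Projecting to the $j$-th factor (which is $g$-equivariant, as $g$ preserves each $W_{j}$), $g$ fixes the image of $X$ in $\PP(W_{j})$ pointwise, so every vector of the cone $\overline{\phi_{j}(V)}$ is an eigenvector of $g|_{W_{j}}$. As $\overline{\phi_{j}(V)}$ is irreducible and contained in the closed set $\bigcup_{\lambda}E_{\lambda}$ (the union of the eigenspaces of $g$ on $W_{j}$), it lies in a single $E_{\lambda}$; since $\phi_{j}\neq 0$ and $W_{j}$ is irreducible, its span is all of $W_{j}$, forcing $E_{\lambda}=W_{j}$. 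Thus $g$ acts on each $W_{j}$ by a scalar, and a one-line conjugation argument using the faithfulness of $W$ then shows $g\in Z(G)$.

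I expect the only delicate point to be the converse inclusion in (2): a spanning set of $g$-fixed points in $\PP(W_{j})$ does \emph{not} by itself force $g|_{W_{j}}$ to be scalar, so the irreducibility of $\overline{\phi_{j}(V)}$ is essential—it is precisely what collapses the union of eigenspaces to a single one. The remaining ingredients, namely the fibre-dimension estimate in (1) and the torsion-rank bound for tori, are routine.
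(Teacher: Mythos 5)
Your proof is correct and is essentially the paper's own argument: part (1) via the free $S$-action on $X'=X\cap W'$ and the inclusion $Z(G)\hookrightarrow S$ forced by equivariance plus multihomogeneity, and part (2) via irreducibility of the projection of $X$ to $W_j$ collapsing the union of eigenspaces of $g$ to a single one, which must be all of $W_j$ by simplicity. The extra details you supply (the torsion-rank bound for tori, the conjugation argument identifying scalar-acting elements with $Z(G)$) are exactly what the paper leaves implicit in its identification $Z(G)=G\cap\Cst^{m}$.
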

\begin{proof}
We may regard $G$ as a subgroup of $\prod_{i=1}^{m}\GL(V_{i})$ and of  $\prod_{j=1}^{m}\GL(W_{j})$, and so $Z(G) = G\cap \Cst^{n}$ and $Z(G) = G \cap \Cst^{m}$. 

(1) The first inequality is clear because $X$ is stable under $S$. For the second we remark that $Z(G) \subset S$ since $\phi$ is $G$-equivariant and so $T(A)z = z$ for all $z\in Z(G)$.

(2) Let $g\in G$ act trivially on $\PP(X)$. Then every $x\in X_{j}:=\pr_{W_{j}}(X)$ is an eigenvector of $g|_{W{j}}$. But $X_{j}$ is irreducible and therefore contained in a fixed eigenspace of $g$ on $W_{j}$. Since $W_{j}$ is a simple $G$-module this implies that $g|_{W_{j}}$ is a scalar.
\end{proof}

\begin{prop}\label{rankA.prop}
Let $\phi\colon V \to W$ be a multihomogeneous faithful covariant of degree $A = (\alpha_{ji})$ where all components $\phi_{j}$ are non-zero. Assume that $G$ has a trivial center. Then 
$$
\edim G \leq \dim\phi - \rk A \text{ and }
\cdim G \leq \dim\phi - \rk A +1.
$$ 
In particular, if $\phi$ is a minimal regular covariant, then $\rk A =1$, and if $\phi$ is a minimal rational covariant, then $A = 0$.
\end{prop}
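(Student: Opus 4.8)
The plan is to extract everything from Lemma~\ref{centerG.lem} applied to the projective image $\PP(X)$, where $X=\overline{\phi(V)}$. First I would record two preliminary facts. Since $S=\Im T(A)$ is the image of the torus homomorphism $T(A)\colon\Cst^n\to\Cst^m$ attached to the integer matrix $A$, its dimension equals the rank of $A$, so $\dim S=\rk A$. Because $G$ has trivial center, $\rk Z(G)=0$ and, more importantly, Lemma~\ref{centerG.lem}(2) shows that $G$ acts \emph{faithfully} on the irreducible variety $\PP(X)$, while the first inequality of Lemma~\ref{centerG.lem}(1) gives $\dim\PP(X)\le\dim X-\dim S=\dim\phi-\rk A$.

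The heart of the argument is the bound $\edim G\le\dim\phi-\rk A$. Composing $\phi$ with the quotient $W'\to\prod_j\PP(W_j)$ produces a dominant $G$-equivariant rational map $V\dashrightarrow\PP(X)$, defined on the nonempty open set where all $\phi_j$ are nonzero, so $\PP(X)$ is a faithful $G$-variety dominated by the representation $V$. Since $G$ is finite and $\PP(X)$ is irreducible, faithfulness upgrades to generic freeness: for $g\neq e$ the fixed locus $\PP(X)^g$ is a proper closed subset, and the complement of the finite union $\bigcup_{g\neq e}\PP(X)^g$ is a dense open set on which $G$ acts freely. The standard ``going down'' principle for essential dimension then yields $\edim G\le\dim\PP(X)\le\dim\phi-\rk A$. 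Concretely, I would Segre-embed $\prod_j\PP(W_j)\hookrightarrow\PP(U)$ with $U=\bigotimes_j W_j$, pass to the affine cone inside the $G$-module $U$, and dehomogenize by dividing by a suitable nonzero multihomogeneous invariant, thereby obtaining an honest faithful rational covariant realizing the dimension $\dim\PP(X)$ (as in the projective-to-rational passage of \cite[Proposition 2.2]{KS07} and in the proof of Theorem~\ref{multihomograt.thm}).

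Granting the $\edim$ bound, the $\cdim$ bound is immediate from the universal inequality $\cdim G\le\edim G+1$ recorded in the introduction: $\cdim G\le\edim G+1\le\dim\phi-\rk A+1$. Finally I would deduce the two special cases. If $\phi$ is a minimal regular covariant then $\dim\phi=\cdim G$, so $\cdim G\le\cdim G-\rk A+1$ forces $\rk A\le 1$; on the other hand $\rk A=0$ would mean $A=0$, i.e.\ every component $\phi_j$ is constant and $\Im\phi$ a single point, impossible for a faithful covariant of a nontrivial $G$. Hence $\rk A=1$. If instead $\phi$ is a minimal rational covariant then $\dim\phi=\edim G$, so $\edim G\le\edim G-\rk A$ gives $\rk A\le 0$, i.e.\ $A=0$.

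The step I expect to be the main obstacle is precisely the essential-dimension bound, and within it the passage from the \emph{projective} quotient $\PP(X)$ to a genuine faithful rational covariant into a $G$-module of the \emph{same} dimension. The affine cone over $\PP(X)$ sits in the module $U$ but has dimension $\dim\PP(X)+1$; removing this extra scalar $\Cst$ while keeping the action faithful is exactly what separates $\edim$ from $\cdim$, and making the dimension count land on $\dim\PP(X)$ rather than $\dim\PP(X)+1$ is the delicate point. Everything else --- the identity $\dim S=\rk A$, the generic freeness, and the bookkeeping in the two special cases --- is routine.
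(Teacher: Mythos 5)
Your proposal is correct and follows essentially the same route as the paper: apply Lemma~\ref{centerG.lem} to $\PP(X)=\overline{(p\circ\phi)(V)}$ to get a faithful $G$-action in dimension $\leq \dim\phi - \rk A$, then use $\cdim G \leq \edim G + 1$ and the minimality bookkeeping (including your correct observation that $\rk A = 0$ would force a constant regular covariant) for the remaining claims. The ``delicate point'' you flag --- upgrading the faithful projective compression to an honest rational covariant into a $G$-module of the \emph{same} dimension --- is exactly the step the paper glosses over (it simply calls $p\circ\phi$ a rational faithful covariant), and your Segre-embedding-plus-invariant-normalization sketch is a legitimate way to fill it.
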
 
\begin{proof} Let $X:=\overline{\phi(V)}$, let  $\PP(X) \subset \prod_{j=1}^{m}\PP(W_{j})$ denote the image of $X$  and set $X' := X \cap W'$. Finally, let $S$ denote the image of  $T(A)\colon \Cst^{n}\to \Cst^{m}$. The torus $S$ has dimension $ \rk A$ and acts generically freely on $X:=\overline{\phi(V)}$ since all components of $\phi$ are non-zero. Composing  $\phi$ with the projection $p\colon W \to \PP(W_1) \times \cdots\times \PP(W_m)$ we obtain a rational $G$-equivariant map $\phi'\colon V \to \PP(W_1) \times \cdots\times \PP(W_m)$ such that $\overline{\phi'(V)} = \PP(X)$. Since $Z(G)$ is trivial, $G$ acts faithfully on $\PP(X)$, and $\dim \PP(X) \leq \dim X - \dim S$, by Lemma~\ref{centerG.lem}. Thus $p\circ\phi'$ is a rational faithful covariant of dimension $\leq \dim X - \rk A$, proving the first claim. The second follows since $\cdim G \leq \edim G + 1$.
\end{proof}

\begin{cor}\label{edimcdim.cor}
If $G$ is a (non-trivial) group with trivial center, then 
$$
\cdim G = \edim G + 1.
$$
\end{cor}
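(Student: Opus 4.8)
The plan is to combine the always-available upper bound $\cdim G \le \edim G + 1$ with a matching lower bound extracted from Proposition~\ref{rankA.prop}. The only content beyond bookkeeping is recognizing that for a group with trivial center, a minimal regular covariant, once put in multihomogeneous form, must carry a degree matrix of rank exactly $1$, which is precisely the torus-dimension that separates $\cdim$ from $\edim$.

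First I would record that the inequality $\cdim G \le \edim G + 1$ holds for every finite group (this is stated in the introduction, with reference to \cite[Proposition~2.2]{KS07} and reproved in Theorem~\ref{multihomograt.thm}), so it remains only to prove $\cdim G \ge \edim G + 1$. To this end, choose a minimal regular covariant $\phi\colon V \to W$ realizing $\dim\phi = \cdim G$. By Theorem~\ref{multihomog.thm} I may assume $\phi$ is multihomogeneous and that all of its components $\phi_j$ are non-zero; replacing $\phi$ by this multihomogeneous model does not increase the dimension (Lemma~\ref{phimax.lem}), so it remains a minimal regular covariant with $\dim\phi = \cdim G$. Let $A$ denote its degree matrix.

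Now I would invoke Proposition~\ref{rankA.prop}. Its hypotheses are exactly met: $G$ has trivial center, and $\phi$ is a multihomogeneous faithful covariant with all components non-zero. The proposition then yields $\edim G \le \dim\phi - \rk A$, and moreover its concluding ``in particular'' clause tells us that for a minimal regular covariant the degree matrix satisfies $\rk A = 1$. Substituting, I obtain
$$
\edim G \le \dim\phi - \rk A = \cdim G - 1,
$$
that is, $\cdim G \ge \edim G + 1$. Together with the reverse inequality this gives $\cdim G = \edim G + 1$, as claimed.

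I do not expect a genuine obstacle here, since both of the substantive inputs are already established upstream. The one point that must be handled with care is the reduction to a multihomogeneous minimal covariant with all components non-zero, because Proposition~\ref{rankA.prop} requires both multihomogeneity and non-vanishing of every $\phi_j$ in order to guarantee that the torus $S = \Im T(A)$ acts generically freely (and hence that $\dim\PP(X) \le \dim X - \rk A$). This is supplied verbatim by Theorem~\ref{multihomog.thm}, so the verification amounts to checking that the chosen $\phi$ still computes $\cdim G$ after passing to $\phimax$, which is immediate from minimality and Lemma~\ref{phimax.lem}.
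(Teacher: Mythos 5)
Your proposal is correct and follows essentially the same route as the paper: both take a minimal multihomogeneous regular covariant (Theorem~\ref{multihomog.thm}), apply Proposition~\ref{rankA.prop} to get $\rk A = 1$ and hence $\edim G \le \dim\phi - 1 = \cdim G - 1$, and combine this with the universal bound $\cdim G \le \edim G + 1$. The only difference is expository: the paper states the conclusion as ``$\phi$ is not minimal as a rational covariant, hence $\edim G < \cdim G$,'' while you substitute the rank into the proposition's inequality explicitly.
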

\begin{proof} Let $\phi\colon V \to V$ be a minimal multihomogeneous regular covariant of degree $A$. By  Proposition~\ref{rankA.prop}, $\rk A = 1$ and $\phi$ is not minimal as a multihomogeneous rational covariant. Hence $\edim G < \dim \phi = \cdim G$ and the claim follows.
\end{proof}
\begin{prop}\label{edimcdim.prop}
If $G$ has a non-trivial center, then $\cdim G = \edim G$.
\end{prop}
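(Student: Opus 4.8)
The plan is to prove the nontrivial inequality $\cdim G \le \edim G$; together with the general bound $\edim G \le \cdim G$ this yields equality. Concretely, I want to manufacture a \emph{regular} faithful covariant whose dimension does not exceed $\edim G$. I would start from a minimal rational multihomogeneous covariant $\psi = h^{-1}\phi\colon V \to W$ furnished by Theorem~\ref{multihomograt.thm}, with all components nonzero, $h$ a multihomogeneous invariant, $\phi$ a multihomogeneous regular covariant, and $\dim\psi = \edim G$. Writing $B$ for the degree matrix of $\psi$, the closure $Y := \overline{\psi(V)}$ of its image is stable under the subtorus $S := \Im T(B)\subseteq\Cst^m$, and equivariance forces $Z(G)\subseteq S$, exactly as in Lemma~\ref{centerG.lem}. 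This is the one place the hypothesis enters: since $Z(G)\neq 1$ we have $\dim S \ge \rk Z(G)\ge 1$, so $S$ is a \emph{positive-dimensional} torus stabilizing $Y$.

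The engine is Remark~\ref{invarcovar.rem}. If I can produce a cocharacter $\mu = (\mu_1,\dots,\mu_m)$ of $S$, i.e.\ an element $\mu\in B\QQ^n\cap\ZZ^m$, with all $\mu_j\ge 1$, then taking $f = h$ in (\ref{eqn1}) gives the covariant
$$
\tilde\psi = (h^{\mu_1-1}\phi_1,\dots,h^{\mu_m-1}\phi_m)\colon V \to W ,
$$
which is now \emph{regular} because every exponent $\mu_j-1$ is $\ge 0$. By Remark~\ref{invarcovar.rem} it is again faithful and satisfies $\tilde\psi(V)\subseteq Y$, whence $\dim\tilde\psi\le\dim\psi = \edim G$. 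This exhibits a regular faithful covariant of dimension at most $\edim G$, giving $\cdim G\le\edim G$ and finishing the argument. Note the contrast with Corollary~\ref{edimcdim.cor}: there a trivial center forced $S$ to be trivial for a minimal rational covariant, so no such positive cocharacter could exist and the denominator $h$ could not be cleared without raising the dimension.

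Everything thus reduces to the existence of a strictly positive cocharacter of $S$, and this is the step I expect to be the main obstacle. It amounts to showing that the column span $B\QQ^n$ meets the open positive orthant of $\QQ^m$; by the Gordan--Stiemke alternative this fails precisely when some nonzero row vector $a\ge 0$ satisfies $a\,B = 0$, i.e.\ when a nonnegative combination of the rows of $B$ vanishes. I would first arrange that $W$ has no trivial summand, so that no component $\psi_j$ can have identically zero degree (a degree-zero equivariant component would be a constant map into a nontrivial irreducible $W_j$, forcing $\psi_j=0$, contrary to assumption); this already excludes the case $a = e_j$. For a genuine relation with larger support I would invoke the minimality of $\psi$ together with $\dim S\ge 1$, the point being that such a nonnegative relation should allow one to collapse $Y$ along the corresponding direction and shrink the image while keeping $G$ faithful, contradicting minimality. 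Making this final exclusion precise, and in particular verifying that the nonnegativity of $a$ is exactly what keeps the resulting quotient a polynomially realizable covariant, is the delicate part of the argument and is where the interplay between minimality of $\psi$ and the nontriviality of $Z(G)$ must be exploited.
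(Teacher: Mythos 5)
Your first half is exactly the paper's case (a), and it is correct: once you have $\mu\in B\QQ^{n}\cap\ZZ^{m}$ with all $\mu_{j}\geq 1$, the map $(h^{\mu_{1}-1}\phi_{1},\dots,h^{\mu_{m}-1}\phi_{m})$ is a regular faithful covariant with image in $\overline{\psi(V)}$, giving $\cdim G\leq\edim G$. The genuine gap is in the step you yourself flag as delicate: you reduce everything to showing that the \emph{fixed} degree matrix $B$ of the chosen minimal rational covariant already has a strictly positive vector in its column span, and by Gordan's alternative this forces you to rule out a nonzero nonnegative row vector $a$ with $aB=0$. Neither the minimality of $\psi$ nor $Z(G)\neq\{e\}$ is shown (or known) to exclude such a relation, and your sketch of a contradiction (``collapse $Y$ along the corresponding direction'') is not an argument. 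Even your exclusion of the case $a=e_{j}$ is flawed: for a \emph{rational} component, degree zero does not imply constancy --- a quotient of two polynomials of the same multidegree in the coordinates of some $V_{i}$ with $\dim V_{i}>1$ is multihomogeneous of degree $0$ and nonconstant, so a degree-zero row cannot be dismissed this way.

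The paper sidesteps this entire problem by modifying the covariant instead of analyzing $B$. Since $Z(G)\neq\{e\}$ forces $B\neq 0$ (your observation), pick an entry $\alpha_{j_{0}i_{0}}\neq 0$ and a homogeneous invariant $f\in\OOO(V_{j_{0}})\subset\OOO(V)$ not vanishing on $\psi(V)$, and for $r\in\ZZ$ set $\psi':=(f^{r}\cdot\id)\circ\psi$. This is still rational and faithful with $\dim\psi'\leq\dim\psi=\edim G$, but its degree matrix is different from $B$: the entries of its $i_{0}$-th column are $r\deg f\cdot\alpha_{j_{0}i_{0}}+\alpha_{ji_{0}}$, and for $r$ of suitable sign and magnitude (note $r$ may be negative) these are all $>0$. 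Then $\beta=e_{i_{0}}$ produces exactly the strictly positive vector you need, and your case (a), applied to $\psi'$ rather than $\psi$, finishes the proof. So the missing idea is this degree-shifting trick: you do not need a positive cocharacter for the original $B$, only for the degree matrix of \emph{some} faithful rational covariant of dimension $\leq\edim G$, and composing with $f^{r}\cdot\id$ lets you manufacture one.
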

\begin{proof} Let $\psi\colon V \to V$ be a multihomogeneous minimal rational covariant of degree $A = (\alpha_{ji})$ which is of the form $h^{-1}\phi$ where $h\in\OOO(V)^G$ is a multihomogeneous invariant and $\phi\colon V \to V$ a multihomogeneous regular minimal covariant (Theorem~\ref{multihomograt.thm}). 

(a) If there is a $\beta\in\ZZ^n$ such that all entries of $\gamma:=A\beta$ are $>0$, then the covariant 
$\phi:=(h^{\gamma_1}\psi_1,\ldots,h^{\gamma_n}\psi_n)\colon V \to V$ is regular and faithful. 
Moreover, $\overline{\phi(V)} \subset \overline{\psi(V)}$ because the latter is stable under $T(A)(\Cst^n)$. Hence $\cdim G \leq \dim\phi \leq \dim\psi = \edim G$ and we are done.

(b) In general, $A \neq 0$, since otherwise the center of $G$ would act trivially on the image $\psi(V)$. If   $\alpha_{j_0i_0} \neq 0$,   choose a homogeneous invariant $f\in\OOO(V_{j_0})\subset\OOO(V)$ which does not vanish on $\psi(V)$. For any $r\in\ZZ$ the composition $\psi':=(f^r \cdot \id) \circ \psi$ is still faithful and rational, and $\dim\psi' \leq \dim \psi$. Moreover, we get $\psi'_{j}(v) = f^{r}(\psi_{j_0}(v))\cdot \psi_j(v)$. Therefore the degree of $\psi'_j$ in $V_{i_0}$ is $r \cdot\deg f\cdot \alpha_{j_0i_0} + \alpha_{ji_0}$ for $j=1,\ldots,n$. Hence, for a suitable $r$, all these degrees are $>0$, and we are in case (a) with $\beta := e_{i_0}$.
\end{proof}
In some of our applications we will need the following result.

\begin{cor}\label{centralExtension.cor} Assume that the center $Z(G)$ is cyclic (and non-trivial) and that $Z(G)\cap(G,G) = \{e\}$. If $G/Z(G)$ is faithful, then $G$ is faithful, too, and
$$
\edim G = \cdim G = \cdim G/Z(G) = \edim G/Z(G) + 1.
$$
\end{cor}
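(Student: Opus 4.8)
Throughout I assume $\bar G:=G/Z(G)$ is non-trivial, equivalently $G$ is non-abelian; this is the only case in which the assertion has content, since for abelian $G$ one has $\bar G=\{e\}$. Write $Z:=Z(G)$ and let $\pi\colon G\to\bar G$ be the projection. The plan is to reduce the whole statement to the two situations already handled, a non-trivial centre (Proposition~\ref{edimcdim.prop}) and a trivial centre (Corollary~\ref{edimcdim.cor}). The first step is the purely group-theoretic observation that the hypothesis $Z\cap(G,G)=\{e\}$ forces $Z(\bar G)=\{e\}$: if $z\in G$ has $\pi(z)\in Z(\bar G)$, then $[z,g]\in Z\cap(G,G)=\{e\}$ for every $g\in G$, so $z\in Z$. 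Since $\bar G$ is faithful, non-trivial and now known to be centreless, Corollary~\ref{edimcdim.cor} gives $\cdim\bar G=\edim\bar G+1$; and since $Z$ is non-trivial, Proposition~\ref{edimcdim.prop} gives $\cdim G=\edim G$. It then remains only to prove $\edim G=\edim\bar G+1$, for then $\edim G=\cdim G=\edim\bar G+1=\cdim\bar G$.

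Before that I would dispose of the faithfulness of $G$. As $Z$ is cyclic it has a faithful character $\chi\colon Z\to\Cst$, and because $Z\cap(G,G)=\{e\}$ the composite $Z\hookrightarrow G\to G/(G,G)$ is injective; since $\Cst$ is divisible, $\chi$ extends to a linear character $\tilde\chi\colon G\to\Cst$ with $\tilde\chi|_Z=\chi$. Picking a faithful irreducible representation $\bar\rho$ of $\bar G$ and setting $\sigma:=(\bar\rho\circ\pi)\otimes\tilde\chi$ produces an irreducible representation of $G$. If $\sigma(g)=\id$ then $\bar\rho(\pi(g))$ is a scalar, hence $\pi(g)\in Z(\bar G)=\{e\}$, so $g\in Z$ and then $\chi(g)=\tilde\chi(g)=1$, forcing $g=e$. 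Thus $\sigma$ is faithful and $G$ is a faithful group.

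For the equality $\edim G=\edim\bar G+1$ I would prove the two inequalities separately. For the lower bound, start from a minimal regular multihomogeneous faithful covariant $\phi\colon V\to W$ of $G$ of degree $A$ with all components non-zero (Theorem~\ref{multihomog.thm}), so that $\dim\phi=\cdim G=\edim G$. Projecting $X=\overline{\phi(V)}$ into $\prod_{j}\PP(W_{j})$ produces a rational covariant for $\bar G$, as in the proof of Proposition~\ref{rankA.prop}; it is faithful because the kernel of $G$ on $\PP(X)$ is exactly $Z$ by Lemma~\ref{centerG.lem}(2), and since $Z$ is cyclic we have $\rk Z=1$, so Lemma~\ref{centerG.lem}(1) bounds its dimension by $\dim X-\rk Z=\cdim G-1$. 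Hence $\edim\bar G\le\cdim G-1=\edim G-1$. For the upper bound, take a minimal rational covariant $\bar\psi\colon\bar V\to\bar W$ of $\bar G$ with $\bar V,\bar W$ faithful $\bar G$-modules (Theorem~\ref{multihomograt.thm}), regard them as $G$-modules through $\pi$, let $L$ be the one-dimensional $G$-module afforded by $\tilde\chi$, and set
$$
\psi'\colon\bar V\oplus L\to\bar W\oplus L,\qquad(\bar v,\ell)\mapsto(\bar\psi(\bar v),\ell).
$$
This is a rational $G$-covariant whose image has dimension $\edim\bar G+1$, and it is faithful, since a point $(\bar w,\ell)$ with $\bar w$ of trivial $\bar G$-stabiliser and $\ell\neq0$ has $G$-stabiliser contained in $\{g:\pi(g)=e,\ \tilde\chi(g)=1\}=\{e\}$. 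Therefore $\edim G\le\edim\bar G+1$, and combining the two bounds yields the required equality.

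I expect the lower bound to be the main obstacle. It is precisely there that the cyclicity of $Z$ enters, through $\rk Z=1$ in Lemma~\ref{centerG.lem}, and one must be careful that the projectivised map genuinely computes a bound on $\edim\bar G$, exactly as in Proposition~\ref{rankA.prop}; the hypothesis $Z\cap(G,G)=\{e\}$ is what makes the two endpoints of the chain match, by guaranteeing both that $\bar G$ is centreless and that $G$ is faithful.
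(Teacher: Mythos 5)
Most of your proposal is sound: the observation that $Z\cap(G,G)=\{e\}$ forces $Z(G/Z)=\{e\}$, the character-lifting and faithfulness of $G$, the invocation of Proposition~\ref{edimcdim.prop} and Corollary~\ref{edimcdim.cor}, and the upper bound $\edim G\le\edim\bar G+1$ via $(\bar v,\ell)\mapsto(\bar\psi(\bar v),\ell)$ are all correct (that last construction is a nice alternative to the paper's tensor-twist). The genuine gap is in your lower bound. The composite $V\to\prod_j\PP(W_j)$ is \emph{not} ``a rational covariant for $\bar G$'': its source $V$ is a $G$-module on which $Z$ acts non-trivially, so it is not a $\bar G$-module (or even a $\bar G$-variety), and a $G$-equivariant compression of a faithful $G$-module onto a variety with faithful $\bar G$-action does \emph{not} bound $\edim\bar G$. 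In Proposition~\ref{rankA.prop} this issue is invisible precisely because there $Z(G)=\{e\}$, so the source is a faithful module for the same group acting on $\PP(X)$. Note that your lower-bound paragraph uses only that $Z$ is cyclic non-trivial, never that $Z\cap(G,G)=\{e\}$; that is fatal, because the quaternion group kills the inference. Take $G=Q_8$, so $Z=\{\pm1\}$ and $\bar G\cong(\ZZ/2\ZZ)^2$, and let $\phi=\id\colon V\to V$ on the $2$-dimensional irreducible representation: this is a multihomogeneous faithful covariant, Lemma~\ref{centerG.lem}(2) says $(\ZZ/2\ZZ)^2$ acts faithfully on $\PP(V)=\PP^1$, and your inference would give $\edim(\ZZ/2\ZZ)^2\le 1$, contradicting $\edim(\ZZ/2\ZZ)^2=\cdim(\ZZ/2\ZZ)^2=2$ (Proposition~\ref{edimcdim.prop} and Corollary~\ref{abelian.cor}). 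So a faithful $\bar G$-action of small dimension is worthless for $\edim\bar G$ unless it is reached from a faithful $\bar G$-representation.

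The step can be repaired, but only by feeding in the unused hypothesis. Write $V=\oplus_i V_i$; the center $Z$ acts on $V_i$ by a character $\chi_i$, and since $Z$ embeds in $G/(G,G)$, each $\chi_i$ lifts to $\tilde\chi_i\colon G\to\Cst$. Then $\tilde V:=\oplus_i\left(V_i\otimes\tilde\chi_i^{-1}\right)$ is a faithful $\bar G$-module (an element acting by the scalar $\tilde\chi_i(g)$ on every $V_i$ is central in $G$). Because $\phi$ is multihomogeneous, the map $V\to\prod_j\PP(W_j)$ is unchanged under the scalings $(v_i)\mapsto(\lambda_iv_i)$, hence it is still equivariant for the twisted action, i.e., it is an honest $\bar G$-equivariant map $\tilde V\to\PP(X)$; only now do you get $\edim\bar G\le\dim\PP(X)\le\dim\phi-1=\edim G-1$. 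This is where both $Z\cap(G,G)=\{e\}$ and multihomogeneity genuinely enter. The paper avoids the difficulty altogether: it proves $\cdim G=\cdim G/Z(G)$ directly by transferring minimal covariants between the two groups, taking a minimal homogeneous covariant of $\bar G$ whose degree has been adjusted to be $\equiv1$ modulo the order of the lifted character (Remark~\ref{invarcovar.rem}), observing that it remains equivariant after tensoring with that character (and conversely that homogeneous $G$-covariants on $V\otimes\chi$ untwist), and only then quotes Proposition~\ref{edimcdim.prop} and Corollary~\ref{edimcdim.cor} exactly as you do.
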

\begin{proof}
It easily follows from the assumption $Z(G)\cap(G,G) = \{e\}$ that the center of $G/Z(G)$ is trivial and that every character of $Z(G)$ can be lifted to a character of $G$. Now let $V$ be an irreducible faithful representation of $G/Z(G)$ and let $\phi\colon V \to V$ be a homogeneous minimal covariant. Since $G/Z(G)$ has a trivial center we may assume that the degree of $\phi$ is $\equiv 1 \mod |Z(G)|$ (see Remark~\ref{invarcovar.rem}). If $\chi\colon G \to \Cst$ is a character which is faithful on $Z(G)$ then $V\otimes\chi$ is an irreducible faithful representation of $G$ and $\phi\colon V\otimes\chi \to V\otimes\chi$ is $G$-equivariant and faithful. Hence $\cdim G = \cdim G/Z(G)$. The other two equalities follow with  Proposition~\ref{edimcdim.prop} and Corollary~\ref{edimcdim.cor}.
\end{proof}

\section{The image of a covariant}
In certain cases one can get a handle on the ideal of $\Im\phi$.
 
 \begin{prop}\label{phimax.prop} Let  $V:=\oplus_{i=1}^n V_i$ and let 
 $\phi=(\phi_1,\dots,\phi_n) \colon V \to V$ be a multihomogeneous morphism of degree $A=(\alpha_{ji})$. Assume that $\det A \neq 0$. Then the ideal $\III(\phi(V))$ of the image of $\phi$ is generated by multihomogeneous polynomials.
 \end{prop}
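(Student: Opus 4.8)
The plan is to translate the hypothesis $\det A \neq 0$ into the statement that the image $X := \overline{\phi(V)}$ is stable under the full target torus, and then to apply the standard fact that a torus-stable closed subvariety has a multigraded vanishing ideal. As set up earlier (here $W = V$, so $m = n$), the covariant $\phi$ is equivariant for the homomorphism of tori $T(A)\colon \Cst^n \to \Cst^n$, and consequently $\phi(V)$, together with its closure $X$, is stable under the subtorus $\Im T(A) \subset \Cst^n$ acting on the target copy of $V$. Since the image of a homomorphism of tori is a closed subtorus of dimension $\rk A$, and here $\rk A = n$ because $\det A \neq 0$, this subtorus is all of $\Cst^n$. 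Thus $X$ is stable under the full torus $T := \Cst^n$ that scales the summands $V_i$.

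Next I would record that a polynomial on $V$ is multihomogeneous exactly when it is a weight vector for the induced $T$-action on $\OOO(V)$; the weight decomposition $\OOO(V) = \bigoplus_\mu \OOO(V)_\mu$ is precisely the decomposition by multidegree. Since the vanishing ideal depends only on the closure, we have $\III(\phi(V)) = \III(X)$, and it therefore suffices to prove that $\III(X)$ is graded for this decomposition, i.e. generated by its multihomogeneous components.

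The core step is the elementary fact that torus-stability of $X$ passes to its ideal. For $f \in \III(X)$ and $t \in T$, the translate $t\cdot f$ still vanishes on $X$ (as $t^{-1}X = X$), so the finite-dimensional linear span of the orbit $T\cdot f$ is a $T$-submodule contained in $\III(X)$. By complete reducibility of representations of the torus $T$, this span is a direct sum of weight spaces; since $f$ itself lies in it, every multihomogeneous component $f_\mu$ of $f$ lies in $\III(X)$. Equivalently, one recovers $f_\mu$ as a linear combination $\sum_k c_k\,(t_k\cdot f)$ by a Vandermonde argument over suitably chosen $t_k \in T$. Hence $\III(X)$ is spanned, and in particular generated, by multihomogeneous polynomials, which is the assertion.

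I expect the only point genuinely requiring care to be the implication $\det A \neq 0 \Rightarrow \Im T(A) = \Cst^n$, since everything downstream is a formal consequence of the linear reductivity of $T$. This is also where the hypothesis is essential: for a degenerate degree matrix the image would be stable only under a proper subtorus, yielding a coarser grading rather than full multihomogeneity.
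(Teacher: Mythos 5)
Your proof is correct, but it takes a genuinely different route from the paper's. You use the hypothesis $\det A \neq 0$ on the \emph{target} side: it makes the torus homomorphism $T(A)\colon \Cst^n \to \Cst^n$ surjective (finite kernel, hence closed connected image of full dimension), so $X=\overline{\phi(V)}$ is stable under the whole scaling torus, and then the standard fact that the ideal of a torus-stable closed subvariety is a weight submodule (complete reducibility, or your Vandermonde argument) finishes the job. The paper never shows that $\Im T(A)$ is all of $\Cst^n$; it works instead on the \emph{source} side, through the parametrization: expanding $F(\phi(s_1v_1,\dots,s_nv_n))=\sum_M c_M\, s^{\beta(M)A}M(\phi(v))$ and grouping by powers of $s$ shows that $\sum_{\beta(M)A=\gamma}c_M M\in\III(\phi(V))$ for every $\gamma$, and the hypothesis enters as injectivity of $\beta\mapsto\beta A$ on multidegrees, so each such group is a single multidegree class and every multihomogeneous component of $F$ lies in the ideal. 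These are dual uses of $\det A\neq 0$: injectivity of the induced map on characters versus surjectivity of $T(A)$ on points. Your version is more structural, and it makes transparent what happens when $\rk A<n$ (one only gets a grading by the coarser character lattice of the proper subtorus $\Im T(A)$); the paper's version is elementary and self-contained, requiring no input from algebraic groups (closedness of images of homomorphisms, linear reductivity of tori). Both arguments are complete.
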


\begin{proof} For $v=(v_1,\dots,v_n) \in V$ we have   
$$
 \phi(s_1v_1,\dots,s_nv_n)=(s^{\alpha_1}\phi_1,\dots,s^{\alpha_n}\phi_n)(v)
 $$
 where  $s^{\alpha_j}=s_1^{\alpha_{j1}}\cdots s_n^{\alpha_{jn}}$.   Choose coordinates in each $V_i$ and let $M$ be a monomial in these coordinates. Let $\beta=\beta(M)$ denote the multidegree of $M$, so we have $M(s_1v_1,\dots,s_nv_n)=s^{\beta}M(v_1,\dots,v_n)$. Then  $M(\phi(s_1v_1,\dots,s_nv_n))$ is $M(\phi(v))$ multiplied by 
  $$
 (s^{\alpha_1},\dots,s^{\alpha_n})^\beta  =s_1^{\beta_1\alpha_{11}+\dots+\beta_n\alpha_{n1}}\cdots s_n^{\beta_1\alpha_{1n} +\dots+ \beta_n\alpha_{nn}} =s^{\beta A} 
 $$
 where $\beta A$ is the matrix product of $\beta$ and $A$. 
If $F\in\III(\phi(V))$, we may write $F=\sum_M  c_M M$ where the $c_M$ are constants and $M$ varies over all monomials in the coordinates of the $V_i$. We have  $F(\phi(s_1v_1,\dots,s_nv_n))=\sum_M c_M s^{\beta(M) A}M(\phi(v))$. Hence,  for any $\gamma\in\NN^n$, we obtain
 $$
 \sum_{\beta(M) A =\gamma} c_M M\in\III(\phi(V)).
 $$
Since $\det A \neq 0$,  for any $\gamma$ there is at most one $\beta$ such that $\beta A =\gamma$. It follows that every sum of the form $\sum_{\beta(M)=\beta} c_M M$ belongs to $\III(\phi(V))$. Thus $\III(\phi(V))$ is generated by multihomogeneous polynomials.
 \end{proof}
   
 \begin{cor}\label{dimphi.cor}
 Suppose that $\phi$ is as above and that there is a $k$, $1 \leq k<n$, such that  $\dim V_{k+1}=\dots=\dim V_n=1$. Then $\dim\phi=\dim(\phi_1,\dots,\phi_k)+(n-k)$.
 \end{cor}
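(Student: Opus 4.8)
The plan is to study the projection $\pi\colon V \to V_1\oplus\cdots\oplus V_k$ onto the first $k$ summands. Since $\pi\circ\phi = (\phi_1,\dots,\phi_k)=:\phi'$, the projection restricts to a dominant morphism $\bar\pi\colon X \to \overline{\phi'(V)}$, where $X:=\overline{\phi(V)}$, and by the theorem on the dimension of the fibers one has $\dim\phi-\dim\phi' = \dim\bar\pi^{-1}(u)$ for $u$ in a dense open subset of $\overline{\phi'(V)}$. Every fiber of $\bar\pi$ lies in a coset of $V_{k+1}\oplus\cdots\oplus V_n$, which has dimension $n-k$ by hypothesis; thus the generic fiber has dimension at most $n-k$, and it remains to prove the reverse inequality $\dim\bar\pi^{-1}(u)\geq n-k$ for generic $u$.

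For the lower bound I would use the hypothesis $\det A\neq 0$. Then $\rk A = n$, so the image $S=\Im T(A)$ of $T(A)\colon\Cst^n\to\Cst^n$ is an $n$-dimensional subtorus of $\Cst^n$, hence all of $\Cst^n$; as observed before Lemma~\ref{centerG.lem} (equivalently, since $\III(\phi(V))$ is multihomogeneous by Proposition~\ref{phimax.prop}), $X$ is $S$-stable and therefore stable under the full torus $\Cst^n$ scaling the summands $V_i$ independently. In particular $X$ is invariant under the subtorus $\{1\}^k\times\Cst^{n-k}$ rescaling only the one-dimensional summands $V_{k+1},\dots,V_n$, and $\bar\pi$ is equivariant for this subtorus acting trivially on the target, so each fiber of $\bar\pi$ is $\{1\}^k\times\Cst^{n-k}$-stable. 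Since all $\phi_j$ are nonzero, the locus $V^\circ\subset V$ on which $\phi_1,\dots,\phi_n$ are simultaneously nonzero is dense open, and $\bar\pi(\phi(V^\circ))=\phi'(V^\circ)$ is dense in $\overline{\phi'(V)}$. For $u\in\phi'(V^\circ)$ the fiber $\bar\pi^{-1}(u)$ contains a point all of whose last $n-k$ coordinates are nonzero; the $\{1\}^k\times\Cst^{n-k}$-orbit through that point is free and hence $(n-k)$-dimensional, giving $\dim\bar\pi^{-1}(u)\geq n-k$ on the dense set $\phi'(V^\circ)$.

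It then remains to combine the two bounds on the \emph{generic} fiber. Let $d:=\dim\phi-\dim\phi'$ be the generic fiber dimension, attained on a dense open set $U\subset\overline{\phi'(V)}$. Since $\phi'(V^\circ)$ is dense, $U\cap\phi'(V^\circ)\neq\emptyset$, and any $u$ in this intersection is simultaneously generic (so $\dim\bar\pi^{-1}(u)=d$) and of the type above (so $\dim\bar\pi^{-1}(u)\geq n-k$); hence $d\geq n-k$. Together with the upper bound this yields $d=n-k$, and therefore $\dim\phi=\dim(\phi_1,\dots,\phi_k)+(n-k)$. I expect the main obstacle to be exactly this lower bound: the real work lies in extracting the full-torus stability of $X$ from $\det A\neq 0$ and then ensuring that the $(n-k)$-dimensional orbits sit inside a \emph{generic} fiber rather than merely some special one, which is precisely why the density of $\phi'(V^\circ)$ and the fiber-dimension theorem are both invoked.
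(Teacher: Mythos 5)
Your proof is correct, but it takes a genuinely different route from the paper's. The paper works directly with the ideal: by Proposition~\ref{phimax.prop}, $\III(\phi(V))$ is generated by multihomogeneous polynomials, which in coordinates have the form $F(y_1,\dots,y_m)\,t_{k+1}^{r_{k+1}}\cdots t_n^{r_n}$ with $F$ multihomogeneous in the coordinates of $V_1\oplus\dots\oplus V_k$ and $t_{k+1},\dots,t_n$ coordinates on the one-dimensional summands; since no $\phi_j$ is zero, such a polynomial vanishes on $\Im\phi$ if and only if $F$ vanishes on $\Im(\phi_1,\dots,\phi_k)$, so the ideal is generated by polynomials not involving $t_{k+1},\dots,t_n$. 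This yields the stronger structural conclusion $\overline{\phi(V)}=\overline{(\phi_1,\dots,\phi_k)(V)}\times V_{k+1}\times\dots\times V_n$, of which the dimension formula is an immediate consequence. You instead prove only the dimension statement, via the fiber-dimension theorem: an upper bound $n-k$ valid for every fiber of the projection, and a lower bound on generic fibers coming from free orbits of the subtorus $\{1\}^k\times\Cst^{n-k}$. Your key step --- that $\det A\neq 0$ forces $S=\Im T(A)$ to be all of $\Cst^n$ --- is sound: $S$ is a closed connected subgroup of dimension $n$, hence everything (more concretely, $T(A)\circ T(B)=T((\det A)I_n)$ with $B$ the adjugate of $A$, and the latter map is surjective), and then $\overline{\phi(V)}$ is $\Cst^n$-stable by equivariance of $\phi$ with respect to $T(A)$, as noted before Lemma~\ref{centerG.lem}. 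Your care in intersecting the generic-fiber locus with the dense set $\phi'(V^\circ)$ is exactly what makes the two bounds apply to one and the same fiber, so the argument closes. What the paper's route buys is the product decomposition of the image closure, essentially for free; what your route buys is that one never needs the full multihomogeneous generation of the ideal, only torus-stability of $\overline{\phi(V)}$, which follows from equivariance alone --- a softer, more geometric mechanism resting on standard facts (Chevalley, fiber dimension, orbit dimension) rather than on the explicit description of the generators of $\III(\phi(V))$.
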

 
 \begin{proof} Since the degree matrix  $A=(\alpha_{ji})$ exists,   no $\phi_j$ is zero. Let $m=\dim V_1+\dots+\dim V_k$. By Proposition~\ref{phimax.prop}  the ideal of $\overline{\phi(V)}$ is generated by functions of the form $F(y_1,\dots,y_m)t_{k+1}^{r_{k+1}}\cdots t_n^{r_n}$ where $F$ is multihomogeneous. Such a function vanishes on $\Im\phi$ if and only if $F(y_1,\dots,y_m)$ vanishes on the image of $(\phi_1,\dots,\phi_k)$. Thus the ideal $\III(\phi(V))$ is generated by functions not involving $t_{k+1},\dots, t_n$. As a consequence, $\overline{\phi(V)}=\overline{(\phi_1,\dots,\phi_k)(V)}\times V_{k+1}\times\dots\times V_n$.
 \end{proof}

In order to apply Proposition~\ref{phimax.prop} and Corollary~\ref{dimphi.cor} we need a version of \cite[Lemma 5.2]{KS07}. 

\begin{cor}\label{alpha.cor}
Let $G=G_1\times\cdots\times G_n$ and $V=V_1\oplus\dots\oplus V_n$ where each $V_i$ is an irreducible representation of $G_i$, $i=1,\dots,n$. Let $\phi\colon V\to V$ be a multihomogeneous covariant of degree $A$   and suppose that the prime $p$ divides $|Z(G_i)|$ for all $i$. Then $\det A \neq 0$, and the ideal $\III(\phi(V))$ is generated by multihomogeneous elements.
\end{cor}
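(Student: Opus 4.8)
The statement has two parts, and the second is a direct consequence of the first: once I know that $\det A \neq 0$, Proposition~\ref{phimax.prop} applies verbatim and yields that $\III(\phi(V))$ is generated by multihomogeneous polynomials. So the whole task is to establish $\det A \neq 0$, and the plan is to do this by reducing the degree matrix $A$ modulo $p$ and showing that $A \equiv I \mod p$, whence $\det A \equiv 1 \not\equiv 0 \mod p$.

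The engine is the interaction of the central $p$-elements with the $G$-equivariance of $\phi$. Fix $i$. Since $p$ divides $|Z(G_i)|$, Cauchy's theorem gives an element $z_i \in Z(G_i)$ of order $p$. As $V_i$ is irreducible, Schur's lemma forces $z_i$ to act on $V_i$ as a scalar $\zeta_i$, necessarily a $p$-th root of unity; and because $V_i$ is faithful, the scalar character $Z(G_i) \to \Cst$ describing this action is injective, so $\zeta_i$ has order exactly $p$, i.e.\ is a \emph{primitive} $p$-th root of unity. Regarded as an element of $G = G_1 \times \cdots \times G_n$, the element $z_i$ acts on $V_i$ by $\zeta_i$ and trivially on each $V_k$ with $k \neq i$, since $G_i$ acts trivially there.

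Now I compare the two sides of $\phi(z_i \cdot v) = z_i \cdot \phi(v)$ component by component. On the left, multihomogeneity in the $i$-th slot gives $\phi_j(z_i \cdot v) = \zeta_i^{\alpha_{ji}}\phi_j(v)$; on the right, $z_i$ scales the $j$-th component $\phi_j(v) \in V_j$ by $\zeta_i$ if $j=i$ and fixes it otherwise, so $(z_i \cdot \phi(v))_j = \zeta_i^{\delta_{ji}}\phi_j(v)$. Since every $\phi_j$ is nonzero there is a $v$ with $\phi_j(v) \neq 0$, and cancelling gives $\zeta_i^{\alpha_{ji}} = \zeta_i^{\delta_{ji}}$; as $\zeta_i$ is a primitive $p$-th root of unity this reads $\alpha_{ji} \equiv \delta_{ji} \mod p$. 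Letting $i$ range over $1,\ldots,n$ yields $A \equiv I \mod p$, hence $\det A \equiv 1 \mod p$ and in particular $\det A \neq 0$. The second assertion then follows from Proposition~\ref{phimax.prop}.

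I expect the only delicate point to be the claim that $z_i$ acts by a \emph{primitive} $p$-th root of unity rather than merely by some $p$-th root of unity; this is precisely where the faithfulness of $V_i$ enters, through the injectivity of the scalar character of $Z(G_i)$. Without that input a central $p$-element could act trivially on $V_i$, the congruence $A \equiv I \mod p$ would fail, and the argument would break down.
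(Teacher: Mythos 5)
Your proof is correct and is essentially the paper's own argument: both produce, for each $i$, an element of $G_i$ acting as a primitive $p$-th root of unity on $V_i$ and trivially on the other summands, use equivariance together with multihomogeneity to deduce $A \equiv I \bmod p$ (diagonal entries $\equiv 1$, off-diagonal $\equiv 0$), and then invoke Proposition~\ref{phimax.prop}. The one point worth noting is that faithfulness of $V_i$, which you correctly isolate as the crucial input (via Cauchy and Schur), is not literally among the stated hypotheses; it is tacit in the paper's setting and used just as implicitly in its proof, so your making it explicit is a feature, not a flaw.
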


\begin{proof}
Let $\xi$ be a primitive $p$th root of unity. Then we have $\phi_j(v_1,\dots,\xi v_i,\dots,v_n)=\xi^{\alpha_{ji}}\phi_j(v_1,\dots,v_n)$.  There is an element of $G_j$ which acts as $\xi$ on $V_j$ and trivially on $V_i$ if $i\neq j$. Hence $\xi^{\alpha_{ji}}=1$ for $i\neq j$. If $i=j$, one similarly shows that $\xi^{\alpha_{jj}}=\xi$ by equivariance relative to $G_j$. This implies that 
$$
\alpha_{ji}\equiv
\begin{cases} 1 \mod p &\text{for }i=j,  \\
0 \mod p &\text{otherwise,} 
\end{cases}
$$
and so $\det(\alpha_{ij}) \neq 0$. Now apply Proposition~\ref{phimax.prop}.
\end{proof}
 
 We say that $G$ is \emph{faithful\/} if it admits a faithful irreducible representation.
We now get the following  result  which extends Corollaries 6.1 and 6.2 of \cite{KS07}.
 
 \begin{cor}\label{timesZp.cor}
Let $G=G_1\times\cdots\times G_n$ be a product of non-trivial faithful groups and let $p$ be a prime.
\be
\item If $p$ is coprime to $|Z(G)|$, then $\cdim (G\times\ZZ/p) = \cdim G$.
\item If $p$ divides all $|Z(G_i)|$, then $\cdim (G \times (\ZZ/p)^m) = \cdim G + m$.
\ee 
In particular, if $H$ is a non-trivial faithful group and $m\geq 1$, then
$$
\cdim (H \times (\ZZ/p)^m) = 
\begin{cases} \cdim H + m  &\text{if $p$ divides $|Z(H)|$};\\ 
\cdim H + (m-1) &\text{otherwise.} 
\end{cases}
$$
\end{cor}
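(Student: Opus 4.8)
The plan is to prove the two numbered statements first, and then assemble the ``In particular'' dichotomy as a direct consequence by taking $G=H$ and $n=1$. For part (1), the hypothesis is that $p$ is coprime to $|Z(G)|$. The strategy is to start with a minimal multihomogeneous covariant $\phi\colon V\to V$ and invoke the application recorded in Remark~\ref{invarcovar.rem}: since $p\nmid|Z(G)|$ we may arrange the degree matrix $A$ to satisfy $A\not\equiv 0\bmod p$. First I would build a covariant for the enlarged group $G\times\ZZ/p$ on the module $V\oplus L$, where $L$ is a faithful one-dimensional representation of $\ZZ/p$ on which $G$ acts trivially. Because some entry of $A$ is a unit mod $p$, I expect to be able to produce a multihomogeneous covariant $V\oplus L\to V\oplus L$ whose $L$-component is forced to absorb the $\ZZ/p$-action via a monomial in the $V$-coordinates, so that the image projects isomorphically and no extra dimension is spent. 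Concretely, one writes the new covariant as $(\phi,\ \chi)$ where $\chi$ is a suitable monomial covariant into $L$; faithfulness on the $\ZZ/p$-factor comes from $A\not\equiv 0\bmod p$, and Corollary~\ref{dimphi.cor} (with $k=n$ and the one extra one-dimensional summand $L$) would normally \emph{add} one to the dimension, but here the point is that the $L$-coordinate is a function of the $V$-coordinates on the image, so no dimension is added and $\cdim(G\times\ZZ/p)\le\cdim G$. The reverse inequality $\cdim(G\times\ZZ/p)\ge\cdim G$ is immediate since any faithful covariant of the product restricts to a faithful covariant of $G$.

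For part (2), the hypothesis is that $p$ divides every $|Z(G_i)|$. Here the key input is Corollary~\ref{alpha.cor}: for the product structure $G=G_1\times\cdots\times G_n$ and any multihomogeneous covariant $\phi\colon V\to V$, the degree matrix $A$ is congruent to the identity mod $p$, hence $\det A\neq 0$, and $\III(\phi(V))$ is generated by multihomogeneous polynomials. I would set $\tilde G := G\times(\ZZ/p)^m$ acting on $\tilde V := V\oplus L_1\oplus\cdots\oplus L_m$, where each $L_k$ is a faithful one-dimensional representation of the $k$th copy of $\ZZ/p$. Starting from a minimal multihomogeneous covariant of $\tilde G$ (which exists by Theorem~\ref{multihomog.thm} since $\tilde V$ is faithful), the degree matrix now has the block-triangular shape forced by equivariance: the condition that a $p$th root of unity acting on $L_k$ rescales the corresponding component correctly, combined with Corollary~\ref{alpha.cor} applied to the $G$-block, shows the full degree matrix is nonsingular and meets the hypotheses of Corollary~\ref{dimphi.cor} with the $m$ one-dimensional summands $L_1,\dots,L_m$ as the last coordinates. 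Corollary~\ref{dimphi.cor} then yields $\dim\phi = \dim(\text{$V$-part}) + m$, giving $\cdim\tilde G\ge \cdim G + m$ for the minimal covariant, while the matching upper bound $\cdim\tilde G\le\cdim G+m$ follows by exhibiting an explicit covariant that is minimal on the $V$-block and uses each $L_k$-coordinate once.

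The main obstacle I anticipate is making precise, in part (2), \emph{why} a minimal covariant of $\tilde G$ must genuinely split off the $(\ZZ/p)^m$-directions and cannot instead trade dimension in $V$ against the $L_k$ so as to beat $\cdim G+m$. Corollary~\ref{dimphi.cor} is the right tool, but it presupposes that the minimal covariant is multihomogeneous of a degree matrix $A$ with $\det A\ne0$ and with the last $m$ summands one-dimensional; producing such a \emph{minimal} covariant, rather than merely some covariant, is the delicate point. I would handle this by first passing to a minimal multihomogeneous covariant via Theorem~\ref{multihomog.thm}, then checking that its degree matrix satisfies the hypotheses of Corollary~\ref{alpha.cor} on the $G\times\ZZ/p$ blocks one factor at a time, so that $\det A\ne0$ holds automatically. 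The lower bound then reads off from Corollary~\ref{dimphi.cor}. The upper bound is the easier direction throughout, built by explicit construction. Finally, the ``In particular'' statement for $H\times(\ZZ/p)^m$ with $m\ge1$ follows by combining the two cases: when $p\mid|Z(H)|$ we apply (2) with $n=1$, and when $p\nmid|Z(H)|$ we first peel off one $\ZZ/p$-factor using (1) to get $\cdim(H\times\ZZ/p)=\cdim H$ and then, since $\ZZ/p$ now contributes to the center, apply (2) to the remaining $m-1$ factors to obtain $\cdim H+(m-1)$.
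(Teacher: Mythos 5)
Part (1) of your proposal contains a genuine gap, precisely at the point where you depart from the paper. In your setup $\ZZ/p$ acts trivially on $V$ and by a primitive $p$th root of unity $\zeta$ on the extra line $L$; write $t$ for the coordinate on $L$. Equivariance forces any nonzero $L$-component $\chi\colon V\oplus L\to L$ to satisfy $\chi(v,\zeta t)=\zeta\chi(v,t)$, so as a polynomial in $t$ it involves only powers $t^k$ with $k\equiv 1\pmod p$; in particular $\chi$ genuinely depends on $t$. Your $V$-component, on the other hand, is the original covariant $\phi$, which does not depend on $t$ at all. Hence every fiber of $\phi$ (viewed as a map on $V\oplus L$) contains whole lines $\{v\}\times L$, and on a generic such line $\chi$ is a nonconstant polynomial in $t$. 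Therefore the projection of $\overline{\Im(\phi,\chi)}$ onto $\overline{\Im\phi}$ has $1$-dimensional generic fibers, so $\dim(\phi,\chi)=\dim\phi+1$: the $L$-coordinate is \emph{not} a function of the $V$-coordinates on the image, and your construction only yields $\cdim(G\times\ZZ/p)\le\cdim G+1$. Note also that the hypothesis $A\not\equiv 0\bmod p$ can play no role in your setup: since $\ZZ/p$ does not act on $V$, the degree matrix of $\phi$ is invisible to the $\ZZ/p$-action (faithfulness of $\ZZ/p$ on the image needs only $\chi\neq 0$). That hypothesis becomes useful exactly when you let $\ZZ/p$ act on $V$ itself, and this is the paper's proof: for $\delta\in\ZZ^n$ let the generator of $\ZZ/p$ act on $V_i$ by $\zeta^{\delta_i}$, giving a $G\times\ZZ/p$-module $V_\delta$; multihomogeneity makes the \emph{same} map $\phi$ equivariant as a morphism $V_\delta\to V_{A\delta}$, and since by Remark~\ref{invarcovar.rem} one may take $A\not\equiv 0\bmod p$, there is a $\delta$ with $A\delta\not\equiv 0\bmod p$, which makes $\phi$ faithful for $G\times\ZZ/p$. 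No extra summand and no extra dimension appear, so $\cdim(G\times\ZZ/p)\le\dim\phi=\cdim G$.

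Your part (2) and the ``In particular'' assembly do follow the paper: the same module $V\oplus\CC^m$, a minimal multihomogeneous covariant from Corollary~\ref{multihomog.cor}, nonsingularity of the degree matrix from Corollary~\ref{alpha.cor} applied to the full product $G_1\times\cdots\times G_n\times(\ZZ/p)^m$ (every factor, including each $\ZZ/p$, has center of order divisible by $p$, so the matrix is congruent to the identity mod $p$), and then Corollary~\ref{dimphi.cor}. Two steps you left implicit should be said explicitly: the lower bound requires that the projection $V\oplus\CC^m\to V$ of the minimal covariant be a \emph{faithful} $G$-covariant, which the paper deduces from all of its components being nonzero (\cite[Lemma 4.1]{KS07}); and in the case $p\nmid |Z(H)|$ you apply (2) to $H\times\ZZ/p$ viewed as a \emph{single} faithful group (not as a product of two factors, one of which violates the hypothesis of (2)), so you must check that $H\times\ZZ/p$ is faithful, which Remark~\ref{product.rem} provides since $p$ is coprime to $|Z(H)|$.
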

\begin{proof}
Let $V_i$ be a faithful irreducible representation of $G_i$. Then $V:= V_1\oplus\cdots\oplus V_n$ is a faithful representation of $G$. By  Corollary~\ref{multihomog.cor} there is a minimal multihomogeneous faithful covariant $\phi=(\phi_1,\ldots,\phi_k)\colon V\to V$  of degree $A$.
For any $\delta=(\delta_{1},\ldots,\delta_{n})\in\ZZ^{n}$ there is a linear action of $\ZZ/p$ on $V$ where the generator $\bar 1\in\ZZ/p$ acts by 
$$
v= (v_{1},\ldots,v_{n}) \mapsto (\zeta^{\delta_{1}}v_{1},\ldots,\zeta^{\delta_{n}}v_{n}),\quad \zeta:= e^{\frac{2\pi i}{p}}.
$$
This actions commutes with the $G$-action and defines a $G\times\ZZ/p$-module structure on $V$ which will be denoted by $V_{\delta}$. It follows that for  $\mu = A\delta$ the multihomogeneous map $\phi$ is a $G\times\ZZ/p$-equivariant morphism $\phi\colon V_{\delta}\to V_{\mu}$. If $p$ is coprime to $|Z(G)|$ we can assume that $A \not\equiv 0 \mod p$ (Remark~\ref{invarcovar.rem}).  Then  there is a $\delta$ such that $\mu = A\delta \not\equiv 0 \mod p$ and so $\phi$ is a faithful covariant for the group $G\times\ZZ/p$, proving (1).

Assume now that $p$ divides all $|Z(G_i)|$. There is a minimal multihomogeneous covariant
$\psi\colon V\oplus\CC^m \to V\oplus\CC^m$ for $G\times(\ZZ/p)^m$ where $(\ZZ/p)^m$ acts in the obvious way on $\CC^m$. Clearly, no entry of $\psi$ is zero and by Corollaries~\ref{alpha.cor} and \ref{dimphi.cor},  we get
$\dim\psi=\dim\phi+m$ where $\phi\colon V\oplus\CC^m\to V$ is $\psi$ followed by projection to $V$. Since each component of $\phi $ is nonzero, $\phi$ is faithful for $G$    \cite[Lemma 4.1]{KS07}. Thus $\dim\phi \geq\cdim G$. But clearly, $\cdim (G\times(\ZZ/p)^m)\leq\cdim G+m$, hence we have equality, proving (2).
\end{proof}

As an immediate consequence we get the following result.
\begin{cor}\label{abelian.cor}
Let $G$ be abelian of rank $r$. Then $\cdim G=r$.
\end{cor}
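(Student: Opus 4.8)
The plan is to prove the two inequalities $\cdim G \le r$ and $\cdim G \ge r$ separately, where I take $r$ to be the number of invariant factors in the decomposition $G \cong \ZZ/d_1 \times \cdots \times \ZZ/d_r$ with $1 < d_1 \mid d_2 \mid \cdots \mid d_r$ (equivalently, the minimal number of generators of $G$). For the upper bound I would note that each cyclic factor $\ZZ/d_i$ carries a faithful one-dimensional character $\chi_i$; pulling these back along the projections $G \to \ZZ/d_i$ and taking their direct sum produces a faithful $G$-module $V \cong \CC^r$, since $g=(g_1,\dots,g_r)$ lies in $\bigcap_i \ker\chi_i$ only if every $g_i = e$. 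The identity map $V \to V$ is then a faithful covariant of dimension $r$, whence $\cdim G \le r$.

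For the lower bound I would combine two ingredients. The first is that $\cdim$ is monotone under passage to subgroups: if $\phi\colon V\to W$ is a faithful covariant for $G$ and $H\le G$, then restricting the actions makes $\phi$ an $H$-equivariant covariant of the \emph{same} dimension, and it stays faithful because a point of $\phi(V)$ with trivial $G$-stabilizer also has trivial $H$-stabilizer; hence $\cdim H \le \cdim G$. The second is the value $\cdim(\ZZ/p)^r = r$, which I would read off directly from Corollary~\ref{timesZp.cor}(2): with the single faithful factor $G_1 = \ZZ/p$ and $m = r-1$ (note $p \mid |Z(\ZZ/p)|$), part (2) gives $\cdim\bigl((\ZZ/p)\times(\ZZ/p)^{r-1}\bigr) = \cdim(\ZZ/p) + (r-1) = r$, using that a nontrivial cyclic group has covariant dimension $1$. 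To finish, I would choose a prime $p \mid d_1$; since $d_1 \mid d_i$ for all $i$, we get $p \mid d_i$ for every $i$, so the $p$-torsion subgroup $G[p]$ is isomorphic to $(\ZZ/p)^r$. Monotonicity then yields $\cdim G \ge \cdim G[p] = r$, which together with the upper bound gives equality.

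The step I expect to be the real subtlety is the bridge from the elementary abelian case back to a general abelian $G$. One cannot invoke the ``in particular'' clause of Corollary~\ref{timesZp.cor} with $H$ an abelian group of rank $\ge 2$, because such $H$ is \emph{not} faithful in the sense of the paper: its irreducibles are one-dimensional, so a faithful irreducible representation would force $H$ to be cyclic. This is precisely why $(\ZZ/p)^r$ must be presented to part~(2) as $\ZZ/p \times (\ZZ/p)^{r-1}$, i.e.\ a single faithful factor times an elementary piece, and why the passage to an arbitrary abelian $G$ of rank $r$ is routed through the subgroup inequality applied to $G[p]$ rather than through any direct product decomposition of $G$ itself. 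The monotonicity lemma is elementary but indispensable, since the invariant factors of $G$ need not split off a clean $(\ZZ/p)^m$ summand.
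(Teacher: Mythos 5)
Your proof is correct and follows essentially the same route as the paper, which states the result as an immediate consequence of Corollary~\ref{timesZp.cor}: the intended derivation is exactly your reduction, via subgroup monotonicity of $\cdim$, to the $p$-torsion subgroup $(\ZZ/p)^r \cong \ZZ/p \times (\ZZ/p)^{r-1}$, to which part~(2) of that corollary applies. Your write-up merely makes explicit the routine steps (the upper bound via a sum of faithful characters, monotonicity, and the observation that part~(2) must be fed a cyclic faithful factor) that the paper leaves unstated.
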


\begin{rem} The corollary  is Theorem 3.1 of \cite{KS07}. 
The proof in    \cite{KS07} uses a lemma whose proof is incorrect. The problem is that the quotient ring $R/pR$ constructed  there may have zero divisors. However, one can give a correct proof of the lemma by paying attention to the powers of the variables that occur in the  determinant $\det (\pt f_i/\pt x_j)$. We omit this proof since the lemma is no longer needed.
\end{rem}

The following strengthens \cite[Proposition 6.1]{KS07}, which in turn then simplifies other proofs in the paper, e.g., the proof of Proposition 6.2.

\begin{cor}\label{Gwithcharacter.cor}
 Let $V=W\oplus \CC_\chi$ be a faithful representation of $G$ where $W$ is  irreducible and $\chi$ is a character of $G$. Let $H$ denote the kernel of $G\to\GL(W)$. Assume that there is a prime $p$ which divides the order of $H$ and such that the following two equivalent conditions hold:
 \be
  \item[(i)] There is a subgroup of $\ker\chi$ acting as scalar multiplication by $\ZZ/p$ on $W$;
\item[(ii)] There is a subgroup of $G$ acting as scalar multiplication by  $\ZZ/p$ on $V$.
 \ee
 Then $\cdim G = \cdim G/H + 1$.
\end{cor}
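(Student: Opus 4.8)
The plan is to prove the two inequalities $\cdim G\le\cdim G/H+1$ and $\cdim G\ge\cdim G/H+1$ separately. I first record a structural remark I will use repeatedly: since $V=W\oplus\CC_\chi$ is faithful, $H\cap\ker\chi=\{e\}$, so $\chi|_H$ is injective; hence $H$ is cyclic and contains an element $\tau$ of order $p$ with $\chi(\tau)$ a primitive $p$-th root of unity. Condition (i) supplies an element $\sigma\in\ker\chi$ of order $p$ acting on $W$ as a primitive $p$-th root of unity $\zeta$ and trivially on $\CC_\chi$.

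For the upper bound I would start from a minimal covariant $\psi\colon W\to W$ of the faithful group $G/H$ and form the $G$-equivariant $\phi\colon V\to V$, $\phi(w,z)=(\psi(w),z)$. At a point $(\psi(w_0),1)$ with $w_0$ chosen so that $\psi(w_0)$ has trivial $G/H$-stabilizer, any $g\in G$ fixing it must satisfy $\chi(g)=1$ and $\bar g\,\psi(w_0)=\psi(w_0)$, forcing $g\in H\cap\ker\chi=\{e\}$. Thus $\phi$ is a faithful $G$-covariant with $\overline{\Im\phi}=\overline{\psi(W)}\times\CC_\chi$, of dimension $\dim\psi+1=\cdim G/H+1$.

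For the lower bound I would take, by Theorem~\ref{multihomog.thm}, a minimal multihomogeneous covariant $\phi=(\phi_1,\phi_2)\colon V\to V$ with all components non-zero and degree matrix $A=(\alpha_{ji})$, so $\dim\phi=\cdim G$. The crux is $\det A\ne0$. Evaluating equivariance of $\phi_1$ and $\phi_2$ against $\sigma$ gives $\alpha_{11}\equiv1$ and $\alpha_{21}\equiv0\pmod p$, while evaluating against $\tau$ gives $\alpha_{12}\equiv0$ and $\alpha_{22}\equiv1\pmod p$; hence $A$ reduces to the identity matrix modulo $p$ and $\det A\not\equiv0\pmod p$. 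Since $\dim V_2=\dim\CC_\chi=1$, Corollary~\ref{dimphi.cor} then yields $\cdim G=\dim\phi=\dim\phi_1+1$, so it remains to produce a faithful $G/H$-covariant of dimension $\le\dim\phi_1$.

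Here is where I expect the main difficulty. Writing $\phi_1(w,z)=z^{\alpha_{12}}g(w)$ with $g:=\phi_1(\,\cdot\,,1)$, the $H$-equivariance of $\phi_1$ forces $|H|\mid\alpha_{12}$, so $\chi^{-\alpha_{12}}$ is trivial on $H$ and $g\colon W\to W\otimes\CC_{\chi^{-\alpha_{12}}}$ descends to a $G/H$-covariant. The obstacle is that this \emph{twisted} target need not be a faithful $G/H$-module: a central element of $G/H$ acting on $W$ by the appropriate scalar can act trivially after the twist, so $g$ itself may fail to be faithful. I would remove the twist by multiplying with a semi-invariant. Since $W$ is faithful, every character of the finite group $G/H$ occurs in $\OOO(W)$; in particular the character $\chi^{\alpha_{12}}$ (which is well defined on $G/H$) is realized by a non-zero $f_0$ with $f_0(\bar g\,w)=\chi(g)^{\alpha_{12}}f_0(w)$. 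Then $f_0\,g\colon W\to W$ is a genuine untwisted $G/H$-covariant, non-zero and landing in the faithful irreducible module $W$, hence faithful by \cite[Lemma~4.1]{KS07}; moreover $\overline{(f_0 g)(W)}\subseteq\overline{\phi_1(V)}$, so $\dim(f_0 g)\le\dim\phi_1$. (When $\alpha_{12}=0$ there is no twist and $g$ already works, with $\dim g\le\dim\phi_1$; this easy case I would dispatch separately.) Therefore $\cdim G/H\le\dim\phi_1=\cdim G-1$, which together with the upper bound gives the asserted equality.
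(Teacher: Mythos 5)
Your proof is correct, and its skeleton coincides with the paper's: both take a minimal multihomogeneous covariant $(\phi_1,\phi_2)\colon V\to V$ (the paper writes $(\phi,h)$), derive the congruences $A\equiv I \pmod p$ from the two scalar subgroups, conclude $\det A\neq 0$ and hence $\cdim G=\dim\phi_1+1$ via Corollary~\ref{dimphi.cor}, and then reduce everything to showing $\dim\phi_1\geq\cdim G/H$. Where you genuinely diverge is in this last descent step. The paper observes that $\phi_1\colon W\oplus \CC_\chi\to W$ is automatically $H$-invariant (because $H$ acts trivially on the target $W$), hence factors through the quotient $(W\oplus\CC_\chi)/H$, which is again a $G/H$-module isomorphic to $W\oplus\CC$ (via $(w,z)\mapsto (w,z^{|H|})$); the induced map $W\oplus\CC\to W$ is a nonzero covariant into the irreducible faithful $G/H$-module $W$, hence faithful by \cite[Lemma 4.1]{KS07}, and it has the same dimension as $\phi_1$. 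You instead restrict to the slice $z=1$, obtaining the twisted covariant $g\colon W\to W\otimes\CC_{\chi^{-\alpha_{12}}}$, and remove the twist by multiplying with a semi-invariant $f_0$ of the appropriate weight. This works---your verifications that $|H|$ divides $\alpha_{12}$, that $f_0g$ is a nonzero honest $G/H$-covariant into $W$ (hence faithful), and that $\overline{(f_0g)(W)}\subseteq\overline{\phi_1(V)}$, with $\alpha_{12}=0$ treated separately, are all sound---but it costs an extra ingredient the paper never needs: the classical fact that every character of $G/H$ occurs in $\OOO(W)$ when $W$ is faithful (true, e.g., by restricting functions to a free orbit, but not among the tools quoted in this paper or in \cite{KS07}). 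The quotient argument is cleaner precisely because it keeps the $\CC$-factor in the source, so no twist ever appears and no semi-invariant is required. On the other side of the ledger, you spell out the inequality $\cdim G\leq\cdim G/H+1$ (via $(w,z)\mapsto(\psi(w),z)$ and the point $(\psi(w_0),1)$), which the paper compresses into ``and our result follows,'' and your opening remark that $\chi|_H$ is injective is exactly the paper's justification for the (assumed, so not strictly needed) equivalence of (i) and (ii).
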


\begin{proof} It is easy to see that the two conditions are equivalent, because $\chi|_{H}\colon H \to \Cst$ is injective.

Let $(\phi,h)\colon W \oplus \CC_\chi \to W \oplus \CC_\chi$ be a minimal faithful multihomogeneous covariant of degree $\deg(\phi,h) = (\alpha_{ji})$. Since $H$ is nontrivial, $h$ cannot be zero. 
By assumption, $H$ contains a subgroup of order $p$ which is mapped injectively into $\Cst$ by $\chi$. Thus the subgroup acts trivially on $W$ and by scalar multiplication on $\CC_\chi$. Therefore, 
$$
\alpha_{22}\equiv 1 \text{ and } \alpha_{12} \equiv 0 \mod p.
$$
Similarly, condition (i) implies that
$$
\alpha_{11}\equiv 1 \text{ and } \alpha_{21} \equiv 0 \mod p.
$$
Thus $\det(\alpha_{ij}) \neq 0$, and so $\dim (\phi,h) = \dim\phi + 1$ by Corollary~\ref{dimphi.cor}. The equivariant morphism $\phi\colon W \oplus\CC_{\chi} \to W$ factors through the quotient $(W\oplus\CC_{\chi})/H$ which is isomorphic to the $G/H$-module $W\oplus\CC$, and defines a faithful $G/H$-covariant $\bar\phi\colon W\oplus\CC \to W$.  Hence, $\dim \phi\geq\cdim G/H$, and our result follows.
\end{proof}

Now consider the following commutative diagram with exact rows where $\ell>m\geq 0$, $\mu_N \subset \Cst$ denotes the $N$-th roots of unity and $\pi$ is the canonical homomorphism $\xi \mapsto \xi^{p^{\ell-m}}$:
$$
\begin{CD}
1 @>>> K @>>> G @>{\chi}>> \mu_{p^\ell} @>>> 1\\
&& @|  @VVV  @V\pi VV \\
1 @>>> K @>>> G' @>{\chi'}>> \mu_{p^m} @>>> 1\\
\end{CD}
$$
\begin{cor}\label{appl.cor}
In the diagram above 
assume that $G'$ is faithful and that the prime $p$ divides $|Z(G')\cap K|$. Then $\cdim G \geq \cdim G' + 1$.
\end{cor}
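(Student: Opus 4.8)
The plan is to reduce the statement to Corollary~\ref{Gwithcharacter.cor} by producing an explicit faithful $G$-module of the shape $W \oplus \CC_\chi$. First I would analyze the kernel $N := \ker(f)$ of the middle vertical map $f\colon G \to G'$. Commutativity of the diagram gives $\chi'(f(g)) = \pi(\chi(g))$, and since $K$ embeds into $G'$ one checks directly that $f$ is surjective, that $N \cap K = \{e\}$, and that $\chi$ restricts to an isomorphism $N \simto \ker\pi = \mu_{p^{\ell-m}}$. Hence $N$ is cyclic of order $p^{\ell-m}$ and $G' \cong G/N$, with $\chi$ faithful on $N$.

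Next I would choose a faithful irreducible representation $W$ of $G'$ (which exists because $G'$ is faithful) and pull it back to a $G$-module along $f$; then $N$ acts trivially and $H := \ker(G \to \GL(W)) = N$, so $p \mid |H| = p^{\ell-m}$ since $\ell > m$. I would then set $V := W \oplus \CC_\chi$. This module is faithful for $G$ because $\ker(G\to\GL(V)) = H \cap \ker\chi = N \cap K = \{e\}$, and $W$ remains irreducible as a $G$-module. The goal is to apply Corollary~\ref{Gwithcharacter.cor} to $V$, which would yield $\cdim G = \cdim G/H + 1 = \cdim G' + 1$, giving in fact the equality underlying the claimed inequality.

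The main obstacle is verifying hypothesis (i) of Corollary~\ref{Gwithcharacter.cor}, that is, exhibiting a subgroup of $\ker\chi$ acting on $W$ as scalar multiplication by $\ZZ/p$. Here I would use that $Z(G')$ acts on the irreducible module $W$ by scalars, and that this scalar action $Z(G')\to\Cst$ is injective because $W$ is faithful. The hypothesis $p \mid |Z(G')\cap K|$ then provides an element of order $p$ in $Z(G')\cap K$; it lies in $K = \ker\chi$ and acts on $W$ as a primitive $p$-th root of unity, so the cyclic group it generates realizes condition (i). The delicate points to get right are that the pulled-back $W$ has kernel exactly $N$ (so that $G/H = G'$) and that the chosen central element genuinely lands inside $\ker\chi$ while acting as an honest order-$p$ scalar on $W$; once these are confirmed, Corollary~\ref{Gwithcharacter.cor} closes the argument.
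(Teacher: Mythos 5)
Your proposal is correct and takes essentially the same approach as the paper: both pull back a faithful irreducible $G'$-module $W$ along $G\to G'$, form $V = W\oplus\CC_\chi$, and apply Corollary~\ref{Gwithcharacter.cor} using an order-$p$ element $z'\in Z(G')\cap K$ acting as a primitive $p$-th root of unity on $W$. The only (inessential) difference is bookkeeping: you analyze the kernel $N=\ker(G\to G')$ and verify condition (i) of that corollary inside $\ker\chi=K$, while the paper describes $G$ as the fiber product $\{(g',\xi)\in G'\times\mu_{p^\ell}\mid \chi'(g')=\pi(\xi)\}$ and verifies the equivalent condition (ii) via the central element $z=(z',\zeta)$.
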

\begin{proof} 
Let $\rho\colon G' \to \GL(W)$ be a faithful irreducible representation. Then $V:= W \oplus \CC_\chi$ is a faithful representation of $G$. Fix a $p$-th root of unity $\zeta\in\Cst$ and let $z'\in Z(G')\cap K$ be such that $\rho(z')=\zeta \cdot\id$. We have
$$
G = \{(g',\xi)\in G'\times \mu_{p^\ell} \mid \chi'(g')=\pi(\xi) \}
$$
and so $z :=(z',\zeta)\in Z(G)$ acts as scalar multiplication with $\zeta$ on $V$. Now the claim follows from Corollary~\ref{Gwithcharacter.cor}.
\end{proof}

\section{Some examples}\label{examples.section}

We consider the covariant dimension of some products and semidirect products of groups. We denote by $C_{n}$ a cyclic group of order $n$.

\begin{exa} Consider the group $G:=C_{3}\rtimes C_{4}$ where a generator of $C_{4}$ acts on $C_{3}$ by sending each element to its inverse. Then $Z(G) \subset C_{4}$ is of order 2, $(G,G)=C_{3}$ and $G/Z(G) \simeq S_{3}$. Hence $\edim G = \cdim G = \cdim S_{3} = 2$, by Corollary~\ref{centralExtension.cor}.
\end{exa}

\begin{exa}\label{S3S3.ex} 
Let $H:=S_3\times S_3$. Since $\cdim S_3=2=\edim S_{3}+1$, we have $\cdim H = \edim H + 1 \leq 2\edim S_{3}+1\leq 3$.  We claim that $\cdim H = 3$. Let $G$ denote $H\times (\ZZ/2\ZZ)^2$. By Corollary \ref{timesZp.cor}, $\cdim G=\cdim H+1$. Since $G$ contains a copy of $(\ZZ/2\ZZ)^4$, its covariant dimension is at least $4$, hence it is 4, and so the covariant dimension of $H$ is 3. The same reasoning shows that $\cdim S_3\times S_4=4$ and $\cdim S_4\times S_4=5$.
\end{exa}

\begin{exa} Let $G:= A_{4}\rtimes C_{4}$ where a generator $x$ of $C_{4}$ acts on $A_{4}$ by conjugation with a $4$-cycle $\sigma\in S_{4}$. We get
$$
Z(G) =\langle x^{2}\sigma^{2}\rangle \simeq C_{2}, \ (G,G) = A_{4}, \  G/Z(G) \simeq S_{4}.
$$
Thus $\edim G = \cdim G = \cdim S_{4} = 3$, by Corollary~\ref{centralExtension.cor}. Moreover, $G$ has a $3$-dimensional faithful representation---the standard representation of $A_{4}$ lifts to a faithful representation of $G$---and $G$ contains a subgroup isomorphic to $C_{2}\times C_{2}\times C_{2}$.
\end{exa}

\begin{exa} Let $\sigma\in S_{n}\setminus A_{n}$ be of (even) order $m$ where $n\geq 4$, and consider the group $G := A_{n} \rtimes C_{m}$ where a generator of $C_{m}$ acts on $A_{n}$ by conjugation with $\sigma$. Again, we can apply Corollary~\ref{centralExtension.cor} and get $\edim G = \cdim G = \cdim S_{n}$.
\end{exa}

\begin{exa} Let $G:=(C_{3}\times C_{3})\rtimes (C_{4}\times C_{8})$ where a generator $x$ of $C_{4}$ acts on $C_{3}\times C_{3}$ by sending each element to its inverse, and a generator $y$ of $C_{8}$ by sending the first component to its inverse and leaving the second component invariant. Then $Z(G) = \langle x^{2},y^{2} \rangle \simeq C_{2}\times C_{4}$, $(G,G) = C_{3}\times C_{3}$ and $G/Z(G) \simeq S_{3} \times S_{3}$. Since the center is not cyclic we cannot apply Corollary~\ref{centralExtension.cor} directly, but have to pass through the intermediate group $\bar G := G/\langle x^{2}\rangle$ which has a cyclic center, namely $\langle y^{2}\rangle$. Thus we obtain $\edim \bar G = \cdim \bar G = \cdim \bar G/Z(\bar G) = \cdim S_{3}\times S_{3} = 3$ by Example~\ref{S3S3.ex}. Since $\bar G$ is faithful we can apply Corollary~\ref{Gwithcharacter.cor}: Take $H:=\langle x^{2}\rangle$ and choose for $\chi$ a lift of the character $\bar\chi$ on $Z(G) =\langle x^2,y^2 \rangle$ given by $\bar\chi(x^{2}) = -1$ and $\bar\chi(y^{2})= 1$. We finally get $\edim G = \cdim G = \cdim \bar G + 1 = 4$.
\end{exa}

\begin{exa} In this example we study semi-direct products of cyclic $p$-groups where $p$ is a fixed prime. We generalize  \cite[Proposition 6.2]{KS07} with a simpler proof.  By a recent general theorem due to \name{Karpenko-Merkurjev} \cite{KaM08} this case could be reduced to a question of representation theory. {\it For any finite $p$-group $G$ the essential dimension $\edim G$ equals the minimal dimension of a faithful representation of $G$.} We will only treat some very special cases, without using this result.

 Let  $\ell>1$ be an integer and $\alpha$ an automorphism of $\ZZ/p^\ell$ of order $|\alpha|=p^d$ (hence $ d<\ell$). For any $k\geq d$ we can form the semidirect product $G_p(k,\ell,\alpha):=\ZZ/p^k \ltimes \ZZ/p^\ell$ where the generator $\bar 1$ of $\ZZ/p^k$ induces the automorphism $\alpha$. We first remark that $G_p(k,\ell,\alpha)$ is faithful if and only if $|\alpha|=p^k$. This follows easily from Gasch\"utz's criterion (see Proposition~\ref{prop-Gaschutz} in the next section).

The ``smallest'' non-commutative example is $G_p(1,2,\alpha) = \ZZ/p\ltimes\ZZ/p^2$ where $\alpha\in\Aut(\ZZ/p^2)$ has order $p$, e.g., $\alpha(\bar 1) = \overline{p+1}$. It is also clear that every non-commutative $G(k,\ell,\alpha)$ contains a subgroup of the form $G(k,\ell',\alpha')$ where $\ell'\leq\ell$ and $\alpha'$  has order $p$. Thus we obtain the following commutative diagram with exact rows:
$$
\begin{CD}
1 @>>> \ZZ/p^\ell @>>> G_p(k,\ell,\alpha) @>>> \ZZ/{p^k} @>>> 1\\
&& @A\subseteq AA  @A\subseteq AA  @ | \\
1 @>>> \ZZ/p^{\ell'} @>>> G_p(k,\ell',\alpha') @>>> \ZZ/{p^k} @>>> 1\\
&& @|  @VVV  @V\pi VV \\
1 @>>> \ZZ/p^{\ell'} @>>> G_p(1,\ell',\alpha') @>>> \ZZ/{p} @>>> 1\\
\end{CD}
$$
If we apply Corollary~\ref{appl.cor} to the bottom half of the diagram and then use that $G_p(k,\ell',\alpha')$ is a subgroup of  $G_p(k,\ell,\alpha)$ we find 
$$
\cdim G_p(k,\ell,\alpha) \geq \cdim G_p(1,\ell',\alpha') + 1
$$
if $\alpha$ is non-trivial and $k>1$ since in this case $G_p(1,\ell',\alpha')$ is faithful.

For $p=2$ every $G_2(1,\ell,\alpha)|$ is a subgroup of $\GL_2$, hence $\cdim G_2(1,\ell,\alpha) = 2$. In fact, if $\alpha(\bar 1) = \bar n$, then the subgroup of $\GL_2$ generated by $\left[\begin{smallmatrix} \zeta & \\ & \zeta^n \end{smallmatrix}\right]$ and $\left[\begin{smallmatrix} & 1 \\ 1 & \end{smallmatrix}\right]$ where $\zeta$ is a primitive $2^\ell$th  root of unity is isomorphic to  $G_2(1,\ell,\alpha)$. It follows that every non-commutative semi-direct product $\ZZ/2^k \ltimes \ZZ/2^\ell$ where $k>1$ has covariant dimension at least 3. This gives \cite[Proposition 6.2]{KS07}.

If $p\geq 3$, then $\cdim G_p(1,\ell,\alpha) \geq 3$ if $\alpha$ is non-trivial, because a faithful group of covariant dimension~2 admits a surjective homomorphism onto $D_{2n}$ ($n\geq 2$), $A_4$, $S_4$ or $A_5$ by \cite[Proposition~10.1]{KS07}. Thus a non-commutative semi-direct product  $\ZZ/p^k\ltimes\ZZ/p^\ell$ where $p\geq 3$ and $k\geq2$ has covariant dimension at least 4.
\end{exa}

\section{Faithful Groups}\label{gaschutz.section}
 Let $N_G \subset G$ denote  the subgroup generated by the minimal elements (under set inclusion) among the nontrivial normal abelian subgroups of $G$. Our work in \cite{KS07} used the following criterion of Gasch\"utz.
\begin{prop}[\cite{Ga54}]\label{prop-Gaschutz}
 Let $G$ be a finite group. Then $G$ is faithful if and only if $N_G$ is generated by the conjugacy class of one of its elements.
\end{prop}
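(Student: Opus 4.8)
The plan is to translate both conditions into statements about the $G$-module structure of $N_G$ and to match them by Clifford theory together with Pontryagin duality for finite abelian groups. First I would record the structure of $N_G$: a minimal nontrivial normal abelian subgroup $A$ admits no proper nontrivial $G$-stable subgroup (any such would again be normal abelian), so, written additively, it is an irreducible $\FF_p[G]$-module for some prime $p$ (the $G$-action being conjugation); two distinct ones $A,A'$ satisfy $[A,A']\subseteq A\cap A'=\{e\}$, hence commute, so $N_G$ is abelian and decomposes as $N_G=\bigoplus_p V_p$ with each $V_p$ a semisimple $\FF_p[G]$-module. Because $N_G$ is abelian and normal, the subgroup generated by the $G$-conjugacy class of $x\in N_G$ is exactly the $\ZZ[G]$-submodule $\langle x^G\rangle$, so the right-hand condition says precisely that $N_G$ is a cyclic $\ZZ[G]$-module, equivalently that in each isotypic decomposition $V_p=\bigoplus_S S^{m_S}$ one has $m_S\leq d_S:=\dim_{\op{End}_{\FF_p[G]}(S)}S$ (the multiplicity of $S$ in the largest semisimple quotient of $\FF_p[G]$).

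For the forward implication, suppose $\rho$ is a faithful irreducible representation. Restricting to the normal abelian subgroup $N_G$, Clifford's theorem shows that $\rho|_{N_G}$ is a direct sum of linear characters forming a single $G$-orbit $O=G\cdot\lambda_0$, and faithfulness gives $\bigcap_{\lambda\in O}\Ker\lambda=\Ker\rho\cap N_G=\{e\}$. By duality for the finite abelian group $N_G$, a family of characters has trivial common kernel if and only if it generates the dual group $\Hom(N_G,\Cst)$; thus $O$ generates the dual, i.e.\ the dual of $N_G$ is the cyclic $\ZZ[G]$-module $\langle\lambda_0^G\rangle$. Since dualizing sends a simple $S$ to a simple $S^*$ with the same multiplicity and $d_{S^*}=d_S$, cyclicity transfers back to $N_G$ itself, which is the asserted condition.

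For the converse I would use the reduction that an irreducible $\rho$ is faithful if and only if $\Ker\rho\cap\op{Soc}(G)=\{e\}$ (any nontrivial normal subgroup contains a minimal normal one), and, $\op{Soc}(G)$ being a completely reducible $G$-group, this holds if and only if $\rho$ is faithful on $N_G$ and on the non-abelian socle $T$ simultaneously. Assuming $N_G$ cyclic, duality furnishes a character $\lambda_0$ whose $G$-orbit generates $\Hom(N_G,\Cst)$; any irreducible $\rho$ lying over $\lambda_0$ then has $\rho|_{N_G}$ equal to that orbit and is therefore automatically faithful on $N_G$. The remaining and, I expect, decisive difficulty is to select such a $\rho$ that is also faithful on $T$: the non-abelian socle itself never obstructs faithfulness, since a nonabelian simple factor has only faithful nontrivial irreducibles, but one must check that some constituent lying over $\lambda_0$ kills no minimal normal subgroup of $T$. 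I would carry this out by passing to the inertia group $I=I_G(\lambda_0)$ through the Clifford correspondence and inducting on $|G|$, extending $\lambda_0$ and tensoring with an irreducible of $I$ arranged to be faithful on $T\cap I$; keeping track of which constituents avoid every minimal normal subgroup is the delicate bookkeeping that makes this the crux of the argument.
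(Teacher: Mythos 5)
First, a point of comparison: the paper does not prove this proposition at all --- it is quoted from Gasch\"utz \cite{Ga54}, and the paper's own contribution in that section is only the structural Lemma~\ref{gaschutz.lemma} and the corrected proof of Corollary~\ref{cor-Gaschutz}. So your attempt has to stand on its own. Your structural preliminaries (each minimal normal abelian subgroup is an irreducible $\FF_p[G]$-module, distinct ones commute, $N_G$ is their direct sum) agree with Lemma~\ref{gaschutz.lemma}, and your forward direction is complete and correct: Clifford's theorem makes the constituents of $\rho|_{N_G}$ a single $G$-orbit of linear characters, faithfulness of $\rho$ makes their common kernel trivial, duality for finite abelian groups converts this into the statement that the orbit generates $\Hom(N_G,\Cst)$, i.e.\ that the dual is a cyclic $\ZZ[G]$-module, and cyclicity transfers back to $N_G$ by your multiplicity criterion.

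The genuine gap is the converse, and you flag it yourself: the construction of an irreducible representation lying over $\lambda_0$ that is \emph{also} faithful on the nonabelian part of the socle is deferred to an induction over inertia groups (``delicate bookkeeping'') that is never carried out, so that half of the equivalence is a plan, not a proof. It is worth recording that the bookkeeping you fear is unnecessary if one makes the right initial choice. Let $T$ be the product of the nonabelian minimal normal subgroups, so $\op{Soc}(G)=N_G\times T$ and $Z(T)=\{e\}$. Choose a faithful irreducible representation $\theta$ of $T$: tensor together faithful irreducibles of the simple factors, and note that every normal subgroup of a product of nonabelian simple groups is a product of some of those factors, so the kernel is trivial. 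Let $\rho$ be \emph{any} irreducible constituent of $\Ind_{\op{Soc}(G)}^{G}(\lambda_0\otimes\theta)$. By Clifford's theorem the constituents of $\rho|_{\op{Soc}(G)}$ are the conjugates $(g\lambda_0)\otimes(g\theta)$. If $(g\lambda_0)(n)\,(g\theta)(t)=\id$, then $(g\theta)(t)$ is a scalar, hence central in the image of the faithful representation $g\theta$, hence $t\in Z(T)=\{e\}$, and then $n\in\Ker(g\lambda_0)$; thus
$$
\Ker\bigl((g\lambda_0)\otimes(g\theta)\bigr)=\Ker(g\lambda_0)\times\{e\},
\qquad
\Ker\rho\cap\op{Soc}(G)=\Bigl(\bigcap_{g\in G}\Ker(g\lambda_0)\Bigr)\times\{e\}=\{e\}
$$
by the choice of $\lambda_0$. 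Since every nontrivial normal subgroup of $G$ contains a minimal normal subgroup and hence meets $\op{Soc}(G)$ nontrivially, $\rho$ is faithful. So every constituent works, no selection among constituents and no induction on $|G|$ is needed, and with this paragraph inserted your argument becomes a complete proof.
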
 
 We have the following corollary \cite[Corollary 4.1]{KS07}.
\begin{cor}\label{cor-Gaschutz}
Let $G$ be a non-faithful group and $H \subset G$ a subgroup containing $N_G$. Then $H$ is non-faithful, too.
\end{cor}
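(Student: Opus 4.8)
The plan is to prove the contrapositive: assuming $H$ is faithful, I will deduce that $G$ is faithful, reading Proposition~\ref{prop-Gaschutz} through character duality. The first point is that the hypothesis $N_G\subseteq H$ is exactly what is needed to make $N_G$ a \emph{normal} abelian subgroup of $H$: since $N_G$ is characteristic (automorphisms permute the minimal normal abelian subgroups), it is normal in $G$, hence normal in any intermediate $H$ with $N_G\subseteq H\subseteq G$. This is what lets me bring Clifford theory to bear on the pair $(N_G,H)$, and it is where the hypothesis is genuinely used.

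So suppose $W$ is a faithful irreducible representation of $H$. Because $N_G$ is normal and abelian in $H$, Clifford's theorem says the $N_G$-eigencharacters occurring in $W$ form a single $H$-orbit $\{\mu^{h}\mid h\in H\}$ for some $\mu\in\Hom(N_G,\Cst)$ (under the conjugation action of $H$). An element $n\in N_G$ acts on $W$ as the identity exactly when $\mu^{h}(n)=1$ for all $h$, so $\ker W\cap N_G=\bigcap_{h\in H}\ker\mu^{h}$; as $W$ is faithful this intersection is trivial.

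The key step is a monotonicity in the group: since $H\subseteq G$, the $H$-orbit of $\mu$ is contained in its $G$-orbit, whence
$$
\bigcap_{g\in G}\ker\mu^{g}\ \subseteq\ \bigcap_{h\in H}\ker\mu^{h}\ =\ \{e\}.
$$
Thus $\mu$ has trivial common kernel over all of $G$, i.e.\ the annihilator in $N_G$ of the $\ZZ[G]$-submodule generated by $\mu$ is trivial, so $\mu$ generates $\Hom(N_G,\Cst)$ as a $\ZZ[G]$-module. This is precisely the hypothesis of Proposition~\ref{prop-Gaschutz} seen on the dual side, and it forces $G$ to be faithful, completing the contrapositive.

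The one point needing care, and the main obstacle, is matching this character-theoretic statement with the conjugacy-class formulation of Proposition~\ref{prop-Gaschutz}: the latter asserts that $G$ is faithful iff $N_G$ is generated by a single conjugacy class, i.e.\ iff $N_G$ is a cyclic $\ZZ[G]$-module, whereas the argument above produces a cyclic generator of the \emph{dual} module $\Hom(N_G,\Cst)$. These agree here because $N_G$, being generated by its minimal normal abelian subgroups, is a \emph{semisimple} $\ZZ[G]$-module; for a finite semisimple module the minimal number of generators coincides with that of its dual, since an isotypic component $S^{n}$ and its dual $\widehat S^{\,n}$ impose the same bound $n\le\dim_{\op{End}(S)}S$. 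Hence $N_G$ is cyclic precisely when $\Hom(N_G,\Cst)$ is. I would isolate this self-duality as a one-line lemma (or fold it into a restatement of Gasch\"utz's criterion in character form) and then the proof goes through verbatim.
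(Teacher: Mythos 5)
Your proof is correct, but it takes a genuinely different route from the paper's. The paper argues purely group-theoretically: assuming $H$ faithful, it applies Lemma~\ref{gaschutz.lemma}(2) to $H$ with $L=N_G\cap N_H$ to split $N_H=(N_G\cap N_H)\times M$, picks $(c,d)\in N_H$ whose $H$-conjugacy class generates $N_H$ (Gasch\"utz applied to $H$), and then uses minimality of the $N_i$ to show that the $G$-conjugacy class of $c$ generates all of $N_G$, so $G$ is faithful; this needs both directions of Proposition~\ref{prop-Gaschutz}, the complement construction, and careful handling of $N_H$ (recall that the error being repaired in \cite{KS07} was the false inclusion $N_G\subset N_H$). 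You bypass $N_H$ entirely: Clifford theory for a faithful irreducible $H$-module restricted to $N_G\trianglelefteq H$ gives a character $\mu$ whose $H$-orbit has trivial common kernel; the passage from $H$ to $G$ is then just the inclusion of orbits; Pontryagin duality makes $\Hom(N_G,\Cst)$ a cyclic $\ZZ[G]$-module; and your self-duality remark for finite semisimple modules converts this into cyclicity of $N_G$ itself, so that only the ``if'' direction of Gasch\"utz's criterion is invoked. What your route buys: it is conceptually transparent (you in effect reprove, via Clifford theory, the direction of Gasch\"utz's criterion that the paper applies to $H$, in a form that transfers to $G$ for free), and it needs less structure theory --- only that $N_G$ is abelian (without which it is not a $\ZZ[G]$-module at all) and a sum of simple submodules, both supplied by Lemma~\ref{gaschutz.lemma} and its corollary, which you should cite at that point rather than take for granted. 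What it costs: the duality step is standard but not the ``one-line lemma'' you advertise; a complete proof needs the isotypic decomposition, the count that $S^{n}$ is cyclic if and only if $n\le\dim_{\op{End}(S)}S$, and the fact that $\op{End}(\widehat S)\cong\op{End}(S)^{\mathrm{op}}$ has the same cardinality, which makes that bound self-dual.
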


The proof given in \cite{KS07} claims that $N_G\subset N_H$. But this is false. For example, let $G=S_4\supset D_4$. Then $N_G$ is the Klein 4-group, while  $N_H=Z(D_4)\simeq\ZZ/2\ZZ$. Here is a correct proof.

\begin{lem} \label{gaschutz.lemma} Let $N_1,\dots,N_k$ be the minimal nontrivial normal abelian subgroups of $G$. Then
\begin{enumerate}
\item  Each $N_i$ is isomorphic to $(\ZZ/p\ZZ)^n$ for some $n\in \NN$ and prime $p$. 
\item  Let $L$ be a $G$-normal subgroup of $N_G$. There is a direct product $M$ of a subset of $\{N_1,\dots,N_k\}$ such that $N_G$ is the direct product $LM$.
\end{enumerate}
\end{lem}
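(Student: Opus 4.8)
The plan is to handle part (1) by upgrading ``minimal among normal abelian subgroups'' to ``minimal among all nontrivial normal subgroups''. Indeed, any nontrivial normal subgroup of $G$ contained in $N_i$ is automatically abelian, being a subgroup of the abelian group $N_i$, so by the minimality of $N_i$ it must equal $N_i$; thus each $N_i$ is a minimal normal subgroup of $G$. Then I would fix a prime $p$ dividing $|N_i|$ and consider the $p$-torsion $N_i[p]=\{x\in N_i \mid x^p=e\}$. This is characteristic in $N_i$, hence normal in $G$, and it is nontrivial and abelian, so minimality forces $N_i[p]=N_i$. Therefore $N_i$ is elementary abelian, i.e.\ $N_i\cong(\ZZ/p\ZZ)^n$ for some $n\in\NN$.

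For part (2) I would first record two structural facts. Since distinct $N_i,N_j$ are minimal normal subgroups, $N_i\cap N_j$ is a normal subgroup properly contained in each, hence trivial; as both are normal this gives $[N_i,N_j]\subseteq N_i\cap N_j=\{e\}$, so the $N_i$ commute elementwise. Consequently $N_G=N_1N_2\cdots N_k$ is abelian. Viewing $N_G$ as a finite abelian group, it splits canonically into its $p$-primary components $N_G(p)$, and because this decomposition is characteristic it is $G$-invariant; moreover $L=\prod_p L_p$ with $L_p:=L\cap N_G(p)$, each $L_p$ again $G$-normal. This reduces the problem to one prime at a time.

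Now fix $p$. Then $N_G(p)$ is an elementary abelian $p$-group, i.e.\ an $\FF_p[G]$-module, and it is the sum of those $N_i$ that are $p$-groups. Each such $N_i$ is a \emph{simple} $\FF_p[G]$-module, since its $\FF_p[G]$-submodules are exactly its subgroups that are normal in $G$, of which it has none proper and nontrivial, being a minimal normal subgroup. Hence $N_G(p)$ is semisimple, and I would invoke the standard complement theorem for a sum of simple submodules: choose a subset $I_p$ of the relevant indices maximal subject to the sum $L_p+\sum_{i\in I_p}N_i$ being direct; simplicity of each $N_j$ then forces $N_j\subseteq L_p\oplus\bigoplus_{i\in I_p}N_i$ for every $j$, so this equals $N_G(p)$. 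Setting $M:=\prod_{i\in I}N_i$ with $I:=\bigcup_p I_p$ and reassembling across primes yields $N_G=L\times M$ as desired.

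The main obstacle is precisely this last module-theoretic step: verifying that in the semisimple module $N_G(p)$ the complement of $L_p$ can be taken to be a sub-sum of the \emph{given} simple pieces $N_i$, rather than merely an abstract complementary submodule; this is where the maximality (exchange) argument does the real work. The surrounding bookkeeping --- reducing to a single prime via the primary decomposition, and checking that the elementwise commuting of the $N_i$ makes $N_G$ a genuine internal direct product --- is routine, but must be carried out with care so that $L$ and $M$ are both $G$-normal and intersect trivially.
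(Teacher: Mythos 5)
Your proof is correct, and parts of it coincide with the paper's, but the organization of part (2) is genuinely different, so a comparison is worthwhile. For part (1) you and the paper do essentially the same thing: the paper applies minimality to the characteristic subgroup $pN_i$ (the image of the $p$-th power map), while you use the $p$-torsion $N_i[p]$; both are characteristic in $N_i$, hence normal in $G$, and minimality forces $N_i$ to be elementary abelian. For part (2) the engine is identical --- minimality of each $N_j$ (equivalently, simplicity as a module) forces any proper $G$-normal intersection to be trivial, so a maximal direct sum built from $L$ and some of the $N_i$ must exhaust $N_G$ --- but you route it through more machinery: you first establish that $N_G$ is abelian, split into $p$-primary components, and then invoke the complement theorem for semisimple $\FF_p[G]$-modules, one prime at a time. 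The paper instead runs a one-pass greedy induction directly over the list $N_1,\dots,N_k$: assuming $LM_j$ is a direct product containing $N_1,\dots,N_j$, either $N_{j+1}\subseteq LM_j$ (skip it) or $N_{j+1}\cap LM_j$ is trivial (being a normal abelian subgroup of $G$ properly contained in $N_{j+1}$), in which case $N_{j+1}$ is adjoined and the product stays direct. The paper's version is leaner: the primary decomposition is not needed, since directness of the products follows from normality in $G$ together with trivial pairwise intersections --- no module structure is required --- and the induction treats all primes simultaneously. Your version buys conceptual clarity: it identifies each $N_G(p)$ as a semisimple $\FF_p[G]$-module and exhibits the lemma as an instance of a standard complement theorem. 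Note, however, that the maximality-plus-exchange step you flag as ``the real work'' is precisely the paper's inductive step, so at their core the two arguments are the same.
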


\begin{proof}
By minimality, for any prime $p$ and $i$, $pN_i$ is zero or $N_i$. Thus $N_i\simeq(\ZZ/p\ZZ)^n$ for some $p$ and $n$ giving (1). For (2), inductively assume that we have found a $G$-normal subgroup $M_j$ of $N_G$ which is a direct product of a subset of $\{N_1,\dots,N_j\}$ such that $LM_j$ is a direct product containing $N_1,\dots,N_j$.  We start the induction with $M_0=\{e\}$.    If $LM_j$ contains $N_{j+1}$, then set $M_{j+1}=M_j$. If not, then $N_{j+1}\cap LM_j$ must be trivial, so that the products $M_{j+1}:=M_jN_{j+1}$ and $LM_{j+1}$ are direct where $N_{j+1}\subset LM_{j+1}$. Set $M=M_k$. Then $LM$ is a direct product containing all the generators of $N_G$, hence equals $N_G$.
\end{proof}

 \begin{cor}
 $N_G$ is a direct product of a subset of $\{N_1,\dots,N_k\}$, hence $N_G$ is abelian. 
  \end{cor}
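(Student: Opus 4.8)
The plan is to read off the statement as the special case $L=\{e\}$ of part~(2) of Lemma~\ref{gaschutz.lemma}, combined with part~(1). First I would observe that the trivial subgroup $\{e\}$ is a $G$-normal subgroup of $N_G$, so it is an admissible choice for $L$. Applying part~(2) then yields a direct product $M$ of a subset of $\{N_1,\dots,N_k\}$ such that $N_G = LM = M$ is a direct product; since $L = \{e\}$ contributes nothing, this exhibits $N_G$ itself as the direct product of a subset of the $N_i$.

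For the second assertion I would invoke part~(1): each $N_i$ is isomorphic to $(\ZZ/p\ZZ)^n$ for some prime $p$ and some $n$, and in particular is abelian. A finite direct product of abelian groups is abelian, so $N_G$ is abelian. I do not expect any genuine obstacle here, as all the real work is already contained in Lemma~\ref{gaschutz.lemma}; the only points to verify are the (immediate) $G$-normality of $\{e\}$ and the passage from ``direct product of abelian factors'' to ``abelian,'' both of which are routine.
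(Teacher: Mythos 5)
Your proof is correct and matches the paper's intended argument: the corollary is stated without proof precisely because it is the case $L=\{e\}$ of Lemma~\ref{gaschutz.lemma}(2), combined with part~(1) for abelianness. Both of your verification points (the $G$-normality of $\{e\}$ and that a direct product of the elementary abelian $N_i$ is abelian) are indeed immediate, so nothing is missing.
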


\begin{proof}[Proof of Corollary \ref{cor-Gaschutz}]  
The subgroup $N_G \cap N_H \subset N_H$ is normal in $H$. By Lemma \ref{gaschutz.lemma}   it has a complement $M$. Now assume that $H$ is faithful. Then by Proposition \ref{prop-Gaschutz} there exists an element $(c,d) \in N_H = (N_G \cap N_H) \times M$ whose $H$-conjugacy class generates $N_H$. Then the $H$-conjugacy class of $c$ generates  $N_G \cap N_H$.  Now let $N_i$ be one of the minimal nontrivial normal abelian subgroups of $G$. By hypothesis, $N_i\subset H$, hence $N_i$ contains a minimal nontrivial   $H$-submodule $N'$. Then $N'\subset N_G\cap N_H$. The smallest $G$-stable subspace of $N_G$ containing $N'$ is $N_i$, hence $N_i$ lies in the $G$-submodule of $N_G$ generated by the conjugacy class of $c$. Since $N_i$ is arbitrary, we see that    $G$ is faithful.
\end{proof}

\begin{rem}\label{product.rem}
Let $G_1,G_2,\ldots,G_m$ be faithful groups. {\it Then the product $G_1\times\cdots\times G_m$ is faithful if and only if the orders of the centers $Z(G_i)$ are pairwise coprime.} In fact, the center of the product is cyclic if and only if the orders $|Z(G_i)|$ are pairwise coprime, and in this case the tensor product of irreducible faithful representations $V_i$ of $G_i$ is irreducible and faithful.
\end{rem}

\section{Groups of covariant dimension 2}\label{covdim2.section}
In \cite{KS07} we showed that a finite group  of covariant dimension $2$ is a subgroup of $\GL_{2}$ and thus admits a faithful $2$-dimensional representation. In particular, we have the following result (cf. \cite[Theorem 10.3]{KS07}).
\begin{thm}\label{nonfaithfulcov2.thm}
If $G$ is a non-faithful finite group of covariant dimension 2, then $G$ is abelian of rank 2.
\end{thm}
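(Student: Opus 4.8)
The plan is to combine the cited structural result --- a finite group of covariant dimension $2$ is a subgroup of $\GL_2$, hence admits a faithful two-dimensional representation $V$ --- with the hypothesis that $G$ is \emph{non-faithful}, i.e.\ that $G$ has no irreducible faithful representation whatsoever. The whole point will be that these two facts together force $V$ to split into characters, from which abelianness is immediate.

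First I would argue that $V$ cannot be irreducible. Indeed, if it were, then $V$ itself would be an irreducible faithful representation of $G$, making $G$ faithful and contradicting the hypothesis. Since we work over $\CC$, Maschke's theorem then gives a decomposition $V = L_1\oplus L_2$ into one-dimensional subrepresentations, described by two characters $\chi_1,\chi_2\colon G\to\Cst$. Faithfulness of the $G$-action on $V$ says precisely that the homomorphism $(\chi_1,\chi_2)\colon G\to\Cst\times\Cst$ is injective; in particular $G$ is \emph{abelian}.

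It remains to compute the rank. Each image $\chi_i(G)$ is a finite subgroup of $\Cst$ and is therefore cyclic, say $\chi_i(G)\simeq C_{n_i}$, so the injection above realizes $G$ as a subgroup of $C_{n_1}\times C_{n_2}$; by the structure theory of finite abelian groups this bounds the rank of $G$ by $2$. For the reverse inequality I would use non-faithfulness once more: a cyclic group always possesses a faithful one-dimensional (hence irreducible) representation and is thus faithful, so our non-faithful $G$ cannot be cyclic and must have rank at least $2$. Combining the two bounds yields rank exactly $2$.

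The argument is essentially forced, so there is no deep obstacle; the one step deserving care is the very first, namely recognizing that the \emph{definition} of non-faithful (no irreducible faithful representation) is exactly what rules out irreducibility of the given faithful two-dimensional $V$ and hence delivers the splitting into characters. Once that reduction is in place, everything reduces to the elementary facts that finite subgroups of $\Cst$ are cyclic and that passing to a subgroup of a rank-$2$ finite abelian group cannot raise the rank.
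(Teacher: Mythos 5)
Your deduction from the premise ``$G$ admits a faithful $2$-dimensional representation'' is correct, and it is exactly the implication the paper alludes to with ``In particular, we have the following result.'' The genuine gap is the premise itself: it is not available here. The statement that a finite group of covariant dimension $2$ embeds in $\GL_2$ is a result of \cite{KS07} whose proof is precisely what is broken: for non-faithful groups it rests on \cite[Theorem 10.3]{KS07} --- the very theorem you are asked to prove --- and that proof in turn used the flawed Lemma 10.3 of \cite{KS07}. The existence of the gap is the entire reason this section of the paper exists, and the paper explicitly announces a new proof ``which avoids this lemma.'' Note, moreover, that for a non-faithful group $G$ your premise and the conclusion are elementarily equivalent: an abelian group of rank $2$ embeds diagonally in $\GL_2$, and (by your own argument) a non-faithful finite subgroup of $\GL_2$ is abelian of rank at most $2$. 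So invoking the $\GL_2$ embedding as a black box does not prove the theorem; it restates it. Any genuine proof must pass from the definition of covariant dimension --- the existence of a faithful covariant with $2$-dimensional image --- to some linear structure on $G$, and that step is entirely absent from your argument.

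That passage is what the paper's proof supplies, by a route that uses neither Lemma 10.3 of \cite{KS07} nor the $\GL_2$ embedding. From the multihomogeneous covariant machinery (Lemma~\ref{centerG.lem}) it extracts only the weaker Lemma~\ref{subgroupPGL2.lem}: if $G$ is non-commutative of covariant dimension $2$, then $G/Z(G)$ embeds in $\PGL_2$, hence is dihedral, $A_4$, $S_4$ or $A_5$. It then takes a minimal counterexample and eliminates the possibilities: surjections onto $A_4$, $S_4$, $A_5$ are excluded using Gasch\"utz's criterion, Corollary~\ref{cor-Gaschutz} and \cite[Corollary~4.4]{KS07}; the remaining dihedral case is destroyed by a Sylow analysis resting on Corollary~\ref{timesZp.cor}, Corollary~\ref{abelian.cor} and \cite[Lemma~10.2]{KS07}. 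If you wish to salvage your approach, you would need an independent proof that every group of covariant dimension $2$ embeds in $\GL_2$; but for non-faithful groups that statement \emph{is} the theorem, so no shortcut of this kind is possible.
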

Unfortunately, there is a gap in the proof of Lemma 10.3 in \cite{KS07} which is used in the proof of the theorem. So we give a new proof here which avoids this lemma. We start with the following result.
\begin{lem}\label{subgroupPGL2.lem}
If $G$ is a non-commutative finite group of covariant dimension 2, then $G/Z(G)$ is isomorphic to a subgroup of $\PGL_{2}$.
\end{lem}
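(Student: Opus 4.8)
The plan is to build a minimal faithful \emph{multihomogeneous} covariant of dimension $2$, pass to its projectivized image, show that this image is a rational curve, and realize $G/Z(G)$ as a faithful group of automorphisms of that curve, hence as a subgroup of $\PGL_2=\Aut(\PP^1)$.

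First I would fix a faithful representation $V=\bigoplus_i V_i$ with the $V_i$ irreducible and apply Theorem~\ref{multihomog.thm} (with $W:=V$) to obtain a minimal \emph{regular} multihomogeneous covariant $\phi=(\phi_1,\dots,\phi_m)\colon V\to W=\bigoplus_j W_j$ with all components nonzero; since $\phi$ is minimal, $\dim X=\dim\phi=\cdim G=2$, where $X:=\overline{\phi(V)}$. Let $S:=\Im T(A)\subset\Cst^m$ be the torus attached to the degree matrix $A$, so that $\dim S=\rk A$, and let $\PP(X)\subset\prod_j\PP(W_j)$ be the image of $X$. The decisive point is that $\dim S\geq 1$ in every case. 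If $Z(G)$ is trivial, then $\rk A=1$ by Proposition~\ref{rankA.prop}. If $Z(G)$ is non-trivial, then $Z(G)\subset S$ (exactly as in the proof of Lemma~\ref{centerG.lem}), so the connected subtorus $S$ contains a non-trivial finite subgroup and is therefore positive-dimensional. By Lemma~\ref{centerG.lem}(1) we get $\dim\PP(X)\leq\dim X-\dim S\leq 1$, and by Lemma~\ref{centerG.lem}(2) the kernel of the $G$-action on $\PP(X)$ equals $Z(G)$, so $G/Z(G)$ acts faithfully on $\PP(X)$. Since $G$ is non-commutative this action is non-trivial, which is impossible if $\PP(X)$ is a point; hence $\dim\PP(X)=1$ and $\PP(X)$ is an irreducible projective curve carrying a faithful $G/Z(G)$-action.

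It then remains to identify this curve. Because $\PP(X)$ is the image of the affine space $V$ under the rational map obtained by composing $\phi$ with the projection $W\to\prod_j\PP(W_j)$, it is unirational, and hence rational by L\"uroth's theorem. Its normalization is therefore $\PP^1$, the $G/Z(G)$-action lifts to the normalization, and it remains faithful there (a non-trivial element acting trivially on $\PP^1$ would act trivially on $\PP(X)$). This yields an embedding $G/Z(G)\hookrightarrow\Aut(\PP^1)=\PGL_2$, as desired.

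The step I expect to be the main obstacle is the uniform handling of the center, i.e.\ guaranteeing $\dim S\geq 1$ regardless of whether $Z(G)$ is trivial, since this is precisely what collapses the surface $X$ onto a \emph{curve} $\PP(X)$. The trivial-center case hinges on the sharp equality $\rk A=1$ of Proposition~\ref{rankA.prop}, which is available only because $\phi$ is a minimal \emph{regular} covariant, while the non-trivial-center case uses the containment $Z(G)\subset S$; dovetailing the two is what makes the argument go through for all non-commutative $G$. By comparison, passing to the normalization in order to place the possibly singular curve $\PP(X)$ inside $\PGL_2$ is a routine technical point.
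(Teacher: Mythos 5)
Your proof is correct and follows essentially the same route as the paper's: take a minimal regular multihomogeneous covariant, use Lemma~\ref{centerG.lem} to collapse $X$ to the curve $\PP(X)$ with a faithful $G/Z(G)$-action, and pass to the normalization $\PP^1$. The only difference is that you spell out two points the paper leaves implicit---the non-triviality of $S$ (your case split via Proposition~\ref{rankA.prop} and $Z(G)\subset S$) and the rationality of $\PP(X)$ via L\"uroth---both of which are handled correctly.
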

\begin{proof} We use the notation of section 3. Let $\phi\colon V \to W$ be a multihomogeneous minimal covariant of degree $A$. Set $X:=\overline{\phi(V)}\subset W$ and let $S$ denote the image of the homomorphism $T(A)\colon \Cst^{n} \to \Cst^{m}$. Since $S$ is non-trivial, Lemma~\ref{centerG.lem} shows that $\dim\PP(X) \leq 1$ and that $G/Z(G)$ acts faithfully on $\PP(X)$. Thus $\dim \PP(X) = 1$ and $G/Z(G)$ acts faithfully on the normalization $\PP^{1}$ of $\PP(X)$. The lemma follows.
\end{proof}
\begin{proof}[Proof of Theorem~\ref{nonfaithfulcov2.thm}]
Let $G$ be a minimal counterexample, i.e., $G$ is non-faithful and non-commutative of covariant dimension 2, and every strict subgroup is either commutative or faithful. By the lemma above, $G/Z(G)$ is isomorphic to $A_{5}$, $S_{4}$, $A_{4}$,  or $D_{2n}$, and the image of $N_{G}$ in $G/Z(G)$ is a normal abelian subgroup. 
\par\smallskip
\noindent
{\bf Claim 1:}
{\it There are no surjective homomorphisms from $G$ to $A_5$, $S_4$, or $A_4$.}
\par\smallskip
If $\rho$ is a surjective homomorphism from $G$ to $A_5$ then $\rho(N_G)$ is trivial. If $\rho$  is a surjective homomorphism from $G$ to $S_4$ then $\rho(N_G)\subset K$ where $K\subset S_4$ is the Klein 4-group. In both cases $\rho^{-1}(A_4)\subsetneq G$ is  
neither faithful (by Corollary~\ref{cor-Gaschutz}) nor commutative, contradicting the minimality assumption.

Now assume that there is a surjective homomorphism $\rho\colon G \to A_4$, and
let $g_3\in G$ be the preimage of an element of $A_4$ of order $3$. We  
may assume that the order of $g_3$ is a power $3^\ell$. 
Since $\rho(N_G) \subset K$, the strict subgroup  
$S:=\rho^{-1}(K)\subsetneq G$ is commutative. Denote by $S_2$ the 2-torsion of $S$. Since $\rho(S_2) = K$ we see that $S_2$ has rank 2. Moreover, $S_2$ is normalized by $g_3$, but not centralized, and so $\cdim \langle g_3,S_2 \rangle \geq 3$ by  \cite[Corollary~4.4]{KS07}. This contradiction proves Claim~1.

\smallskip
\noindent
{\bf Claim 2:} 
{\it For every prime $p>2$ the $p$-Sylow-subgroup $G_p\subset G$ is normal and commutative of rank $\leq 2$. Hence $G$ is a semidirect product $G_2\ltimes G'$  where $G':=\prod_{p>2}G_p$ and $G_2$ is a $2$-Sylow subgroup.}
\par\smallskip
From Claim~1 we know that $G/Z(G) \simeq D_{2n}$. Then Claim~2  follows, because every $p$-Sylow-subgroup of $D_{2n}$ for $p\neq 2$ is normal and cyclic.

\smallskip
Now we can finish the proof. The case that $G=G_2$   is handled in  \cite[Lemma~10.2]{KS07}, so  
we can assume that  $G'$ is non-trivial. If $G_2$ commutes with $G'$, then $G_2$ is non-commutative and faithful. Moreover, no $G_p$ can be of  rank 2, else we have a subgroup which is a product $H:=G_2\times(\ZZ/p)^2$, and we have $\cdim H\geq 3$ by Corollary~\ref{timesZp.cor}. 
So $G'$ has rank ~1. Then  $G'$ is cyclic, hence $G$ is faithful by Remark~\ref{product.rem},  which is a contradiction.  Hence we may assume that $G_2$  acts nontrivially on $G'$.

It is clear that $N_G=N_2\times N'$ where $N_2=N_G\cap G_2$ and $N':=N_G\cap G'$. Since $G_2$ acts
nontrivially on $G'$, there is a $g\in G_2$ which induces an order 2 automorphism of some $G_p\neq\{e\}$. Then
one can see that $g$ acts nontrivially on $N_{G_p}$. Since $G$ is not faithful, $N_G$ is not generated by a
conjugacy class (Proposition \ref{prop-Gaschutz}) and the same holds for the subgroupÊ $H:=\langle
g,N_2\rangle\ltimes N'$ (Corollary~\ref{cor-Gaschutz}). Thus $H$ is neither faithful nor commutative, so that
it must equal $G$ by minimality.  It follows that each nontrivial $G_p$, for $p\neq 2$, is isomorphic to either $\ZZ/p$ or 
$(\ZZ/p)^2$.

Suppose that $G_p=(\ZZ/p)^2$ for some $p$. If $g$ acts trivially on $G_p$, then it must act nontrivially on some $G_q$, and then we have the subgroup $(\langle g\rangle\ltimes G_q)\times (\ZZ/p)^2$ which by Corollary~\ref{timesZp.cor}(2)  has covariant dimension at least $3$. 
If $g$ acts by sending each element of $G_{p}$ to its inverse, then, by Corollary~\ref{timesZp.cor}(1) and Corollary~\ref{abelian.cor}, 
$$
\cdim \langle g\rangle\ltimes G_p = \cdim (\langle g\rangle\ltimes G_p)\times \ZZ/p \geq \cdim (\ZZ/p)^{3} = 3.
$$
So we can assume that  $g$ acts on $G_p$ fixing one generator and sending the other to its inverse for every $G_p$ of rank 2. Thus $G'$ is generated by the conjugacy class of a single element. It follows that $N_2$ must have rank 2 and  $g$ must commute with $N_2$, else $N_2\times G'$ is generated by the conjugacy class of a single element. Suppose that $\langle g\rangle\cap N_2\simeq\ZZ/2$. If $g$ acts nontrivially on $\ZZ/p\subset G'$, then $\langle g,N_2\rangle\ltimes\ZZ/p$ contains a subgroup $(\langle g\rangle\ltimes\ZZ/p)\times\ZZ/2$ which has covariant dimension 3, again by Corollary~\ref{timesZp.cor}(2). If $\langle g\rangle\cap N_2=\{e\}$, then we have the subgroup $(\langle g\rangle \ltimes\ZZ/p)\times (\ZZ/2)^2$ which has covariant dimension three by Corollary~\ref{timesZp.cor}(1). This finishes the proof of the theorem.
\end{proof}

\section{Errata to \cite{KS07}}\label {errata.section}

\noindent First sentence after Definition 4.1. Replace ``simple groups.'' by ``nonabelian simple groups.''

\noindent Proof of Proposition 4.3, second paragraph. Replace ``is divisible by $m$'' with ``is congruent to 1 mod $m$.''

\noindent Proof of Corollary 5.1 last sentence. Replace ``Corollary 4.3'' by ``Proposition 4.3.''

\noindent Proof of Proposition 6.1 second paragraph. Change `$`\phi|_W$'' to $F |_W$.''

\noindent Proof of Proposition 6.1 first displayed formula. Replace ``$F(w,t)$'' and ``$F_0(w,t)$'' by ``$F(w,t^m)$'' and ``$F_0(w,t^m)$.''

\noindent Top of page 282. Change ``trivial stabilizer'' to ``trivial stabilizer or stabilizer $\pm I$.''


\end{document}